\newif\ifShowMarginPar
\newcommand{\N}{\mathbb N}
\newcommand{\R}{\mathbb R}
\newcommand{\Zz}{{\mathbb Z}^2}
\renewcommand{\P}{\mathbb P}
\newcommand{\E}{\mathbb E}
\newcommand{\Ai}{{\cal A}}
\newcommand{\Bi}{{\cal B}}
\newcommand{\Ci}{{\cal C}} 
\newcommand{\Di}{{\cal D}}
\newcommand{\Ei}{{\cal E}}
\newcommand{\Gi}{{\cal G}}
\newcommand{\Hi}{{\cal H}}
\newcommand{\Pc}{{\cal P}}
\newcommand{\Ri}{{\cal R}}
\newcommand{\Si}{{\cal S}}
\newcommand{\Ti}{{\cal T}}
\newcommand{\Vi}{{\cal V}}
\newtheorem{theorem}{Theorem}[section]
\newtheorem{lemma}{Lemma}[section]
\newtheorem{prop}{Proposition}[section]
\newtheorem{defn}{Definition}[section]
\newtheorem{assumption}{Assumption}[section]
\newtheorem{cor}{Corollary}[section]
\newtheoremstyle{remark}{3pt}{3pt}{}{}{\bfseries}{:}{.5em}{}
\theoremstyle{remark}
\newtheorem{remark}{Remark}[section]
\title{Criticality and Covered Area Fraction in Confetti and Voronoi Percolation}
\author{ {Partha Pratim Ghosh and  Rahul Roy}
	\footnote{E-Mail: {\tt p.pratim.10.93@gmail.com, rahul@isid.ac.in}}\\
	{\it Indian Statistical Institute, New Delhi}}
\begin{document}
	\maketitle
	
	\begin{abstract}
		Using the randomized algorithm method developed by  Duminil-Copin, Raoufi and Tassion (2019b), we exhibit sharp phase transition for the confetti percolation model. This provides an alternate proof, than that of Ahlberg, Tassion and Texeira (2018), for the critical parameter for percolation in this model to be $1/2$ when the radius of the underlying shapes for the distinct colours arise from the same distribution. In addition, we study the covered area fraction for this model, which is akin to the covered volume fraction in continuum percolation. Modulo a certain `transitivity condition', this study allows us to calculate exact critical parameter for percolation when the underlying shapes for different colours may be of different sizes. Similar results are also obtained for the Poisson Voronoi percolation model when different coloured points have different growth speeds.
	\end{abstract}
	
	\vspace{0.1in}
	\noindent
	{\bf Key words:} Confetti percolation, Voronoi percolation, Randomized algorithm, Boolean model, Covered area fraction
	
	\vspace{0.1in}
	\noindent
	{\bf AMS 2000 Subject Classification:} 60K35.
	
	\section{Introduction}
	\label{Sec:intro}
	
	In this paper we study two models of percolation viz., confetti and Voronoi percolation. The confetti percolation model was introduced by Benjamini and Schramm (1998), while the Voronoi percolation model was introduced by Vahidi-Asl and Wierman (1990). The interest in these models arise from the fact that, in a sense, they are self-dual models, akin to bond percolation of the square lattice. Historically, these models have been studied by stochastic geometers for a long time.  Matheron (1968)  introduced the confetti model as a stochastic model of  sequential superposition of random sets to study natural images and their occlusion. M{\o}ller (1994) studied the random Voronoi tessellations obtained when the seeds of the tessellations arise as a Poisson point process in space.

	In this section we describe the confetti percolation model, the Voronoi percolation is introduced in Section~\ref{Voronoi}.
	For $\lambda\in [0,1]$, let 
	\begin{itemize}
		\item[(i)] $\Pc:= (X_1, X_2, \ldots) $  be a Poisson process of intensity $1$ on $ \R^2 \times (0,\infty)$ defined on a probability space $(\Xi_1, \Hi_1, P_1)$,
		\item[(ii)] $\boldsymbol {\rho} := (\rho_1, \rho_2, \ldots)$ be  a collection of i.i.d. positive, bounded random variables defined on a probability space $(\Xi_2, \Hi_2, P_2)$,
		\item[(iii)] $\boldsymbol {\beta} := (\beta_1, \beta_2, \ldots)$ be a collection of i.i.d. positive, bounded  random variables defined on a probability space $(\Xi_3, \Hi_3, P_3)$,
		\item[(iv)] $\boldsymbol {\varepsilon} := (\varepsilon_1, \varepsilon_2, \ldots )$  be a collection of i.i.d. Bernoulli random variables taking values $1$ and $0$ with probabilities $\lambda$ and $1- \lambda$ respectively defined on a probability space $(\Xi_4, \Hi_4, P_4)$.
	\end{itemize}
	We assume the four processes above are independent of each other and defined on the product space
	$(\Xi, \Hi, \P_\lambda)$ where
	 $\Xi:= \Xi_1\times \Xi_2\times \Xi_3\times \Xi_4$, 
	$ \Hi := \Hi_1 \otimes \Hi_2 \otimes \Hi_3\otimes \Hi_4$ and 	$ \P_\lambda := P_1 \otimes P_2 \otimes P_3\otimes P_4$.
	
	Let 
	\begin{align}
		\label{r:defRB}
		&{\bf R}_i := X_i + ([-\rho_i/2, \rho_i/2]^2 \times \{0\}), \quad {\bf B}_i := X_i + ([-\beta_i/2, \beta_i/2]^2 \times \{0\} ),\nonumber \\
		&\Ci:= \left(\cup_{\{i: \varepsilon_i = 1\}} {\bf R}_i\right) \cup  \left(\cup_{\{i: \varepsilon_i = 0\}} {\bf B}_i \right).
	\end{align}
	Thus the space $ \R^2 \times (0,\infty)$ consists of `floating squares', with the square centred at $X_i$ being either a `red' square with sides of length $\rho_i$ or a `blue' square  with sides of length $\beta_i$ according as $\varepsilon_i $ equals $1$ or $0$ respectively.
	We denote this model by $(\Pc, \lambda, \rho, \beta)$.
	
	A point  ${\bf x} = (x_1, x_2) \in \R^2$ is coloured red if a ray sent vertically up from $(x_1, x_2, 0) $ hits a red `floating' square first, and it is coloured blue if it hits a blue `floating' square  first. Note that with probability $1$, the ray must hit a `floating' square. Formally, for a point  ${\bf x} = (x_1, x_2) \in \R^2$, let
	\begin{align}
		h({\bf x})&:= \inf\{s: (x_1, x_2, s)\in \Ci\} \nonumber\\
		i({\bf x})&:= j \text{ where } h({\bf x}) = X_j(3)
	\end{align}
	where $ X_j(3)$ denotes the third coordinate of $X_j$. The point ${\bf x} = (x_1, x_2) \in \R^2$ is coloured  red if $\varepsilon_{i({\bf x})} =1$, and it is coloured blue if $\varepsilon_{i({\bf x})} =0$. Note that by the properties of Poisson processes, $i({\bf x})$ is almost surely unique and finite, and thus every ${\bf x} \in \R^2$ is coloured red or blue almost surely.

	Confetti percolation studies questions of red/blue percolation on the $x-y$ plane $\R^2$, i.e., for what values of the parameters 
	$\lambda$, $\beta$, and  $\rho$ do we have an unbounded connected red component or an unbounded connected blue component (or possibly both/neither).
	
	Let $C^{\text{red}}$ and $C^{\text{blue}}$ be the (random) red and blue regions respectively of $\R^2$ in the model $(\Pc, \lambda, \rho, \beta)$. Also, let $C^{\text{red}}({\bf 0})$ and $C^{\text{blue}}({\bf 0})$ be, respectively,  the maximal connected components of $C^{\text{red}}$ and $C^{\text{blue}}$ containing the origin ${\bf 0}$. Let
	$$
	\lambda_c (\rho,\beta) := \inf\{\lambda : \P_\lambda(\text{diam} (C^{\text{red}}({\bf 0})) = \infty) > 0\},$$
	here, for a set $S \subset \R^2$, $\text{diam}(S) := \sup\{|x-y| : x,y\in S\}$, where $|\cdot |$ denotes the standard Euclidean norm on $\R^2$. We note here that $\lambda_c (\rho,\beta)$ and $\lambda_c (\beta,\rho)$ are different, with the latter relating to the critical parameter obtained when the red (respectively blue) Poisson points have squares of sides $\beta$ (respectively $\rho$) attached to them. 
	From ergodicity, we immediately have 
	$$
	\lambda_c (\rho,\beta) = \inf\{\lambda : \P_\lambda(C^{\text{red}} \text { has an unbounded connected component}) =1\}.
	$$
	\begin{remark}
	\label{nonremark}
	Corollary~\ref{Nontrivial} shows that for all $\rho$ and $\beta$ positive bounded random variables  we have 
\begin{equation}
\label{nonT}
0 < \lambda_c(\rho,\beta) < 1.
\end{equation}
\end{remark}
	
	Benjamini and Schramm (1998) conjectured that when $\beta = \rho = \text{constant}$
	$$
	\lambda_c (\rho,\beta) =  1/2.
	$$
	Using methods developed by Bollob\'{a}s and Riordan (2006a) for Voronoi percolation,  Hirsch (2015) proved the conjecture when the underlying shape is a square.
	Later M\"{u}ller (2017) extended the proof when the underlying shape is a disc. Finally Ahlberg, Tassion and Texeira (2018) showed that the conjecture is true when $\beta$ and $\rho$ are random variables with the same distribution which need not be bounded, but light tailed.

	Bordenave, Gousseau and Roueff (2006), Galerne and Gousseau (2012)  studied geometric properties of this model where they used the term
	`dead leaves model' for this confetti percolation model. 	\begin{assumption}
		\label{r:ass}
		Throughout this paper, we assume that $\rho$ and $\beta$  are random variables such that $0 < \rho, \beta \leq R$ for some $R > 0$.
	\end{assumption}
	
	Our first result, whose proof is based on methods developed by Duminil-Copin, Raoufi and Tassion (2019a),  is regarding the critical parameter $\lambda_c (\rho,\beta) $ when both the red and blue squares have random sizes. This provides an alternate proof to Theorem~8.3 of Ahlberg, Tassion and Texeira (2018) albeit in the case when the random shapes are bounded.

	\begin{theorem}
		\label{th:dual}
		For $\rho$ and $\beta$ bounded random variables, we have
		$$
		\lambda_c(\rho,\beta) + \lambda_c(\beta, \rho) = 1.
		$$
		In particular, when $\rho \stackrel{d}{=} \beta$, due to symmetry we have
		$$
		\lambda_c(\rho,\beta) =1/2.
		$$	    
	\end{theorem} 
	
	To prove the above theorem, we obtain a sharp phase transition of this model. Ahlberg, Tassion and Teixeira (2018) obtains a similar sharp threshold result for both the confetti model and Voronoi model (discussed in the Section~\ref{Voronoi}) on $\R^2$ via Russo-Seymour-Welsh method, which, in addition to the results in Theorem \ref{th:dual}, allows them to show that neither red nor blue percolates at  criticality. The randomized algorithm method developed by Duminil-Copin, Raoufi and Tassion (2019b),  which we apply, simplifies the proof of Ahlberg, Tassion and Teixeira (2018) and also is valid for any dimension, although we state it only for $\R^2$. Also our results go through {\it mutatis mutandis} for discs, regular polygons and any measure preserving affine transformations of these shapes.
	
	Let $S_r:=\left\{z\in \mathbb{R}^2\,:\, \|z\|_{\infty}=r \right\}$, $B(y,r):=\left\{z\in \mathbb{R}^2 \,:\, \|z-y\|_{\infty}\leq \frac{r}{2} \right\}$,  
	$$
	\theta_n(\lambda):=\mathbb{P}_\lambda(C^{\text{red}}({\bf 0}) \cap S_n \neq \emptyset)\text{ and }
	\theta(\lambda):= \P_\lambda(\text{diam} (C^{\text{red}}({\bf 0})) = \infty).
	$$
	Theorem~\ref{th:dual} will follow from the following proposition, which is proved via the randomized algorithm method developed of Duminil-Copin, Raoufi and Tassion (2019b).  
	\begin{prop}
		\label{theo1}
		For any $\lambda<\lambda_c(\rho,\beta) $, there exists $c_\lambda>0$ such that for any $n\geq 1$,
		\[ \theta_n(\lambda)\leq \exp\left(-c_\lambda n\right).   \] 
		Furthermore, there exists $c>0$ such that  for any $\lambda_c(\rho,\beta) < \lambda $,
		\[\theta(\lambda)\geq c(\lambda -\lambda_c(\rho,\beta) ).\]      
	\end{prop}
We remark here that recently Last, Peccati and Yogeshwaran (2021) have a more direct method of proving the above proposition than the discretisation method presented here.

Covered volume fraction, introduced by physicists, was believed to be the parameter governing percolation  for the continuum Boolean percolation model (see Meester and Roy (1996)). We study the analogous covered area fraction $\text{CAF}_{\text{red}}(\lambda, \rho,\beta)$ for the confetti model where
$$
\text{CAF}_{\text{red}}(\lambda, \rho,\beta) := \E_\lambda\left[\ell(\text{red region in } [0,1]^2)\right],
$$
here $\ell$ denotes the Lebesgue measure on $\R^2$.
The critical covered area fraction for the confetti model is defined as 
$$
\text{cCAF}_{\text{red}}( \rho,\beta) := \E_{\lambda_c(\rho,\beta)}\left[\ell(\text{red region in } [0,1]^2)\right].
$$

Whenever the sizes of the underlying cubes/spheres  of a continuum Boolean percolation model are fixed (not random), a simple scaling argument shows that the critical covered volume fraction is a constant irrespective of the sizes of the cubes/spheres, however if the sizes of the  cubes/spheres are  random, then the critical covered volume fraction may differ (see Meester, Roy and Sarkar (1994) and Gou\'{e}r\'{e} and Marchand (2016)).

For the confetti model, when $\rho \overset{d}{=} \beta$, from Theorem~\ref{th:dual}  we have  $\text{cCAF}_{\text{red}}( \rho,\beta) = 1/2$. Theorem \ref{th:cCAFrandom} below shows that when $\rho$ and $\beta$ are random and with different distributions, $\text{cCAF}_{\text{red}}( \rho,\beta)$ may be different from $1/2$.

Also an analogous result (to that of the universality of the critical covered volume fraction for the continuum Boolean percolation model when the underlying shapes are fixed)  for the confetti model 
would be that $\text{cCAF}_{\text{red}}( \rho,\beta) =  1/2$ when $\rho$ and $\beta$ are fixed, not necessarily equal. 
 Unfortunately we have not been able to prove this directly, and we need an intuitively appealing condition to show that it holds. 
 In case this condition holds, Theorem \ref{th:cCAF1/2} below provides us  exact values of the critical parameter $\lambda_c(\rho,\beta)$ for red and blue squares of fixed, but not necessarily same sizes. To state the condition, it is convenient to use an equivalent construction of the confetti model. 
Let $\Ri$ and $\Bi$ be two independent Poisson point processes on $\R^2 \times (0,\infty)$ of intensities $\lambda_r$ and $\lambda_b$ respectively. At points of $\Ri$ we place red squares of side length $\rho$ and at points of $\Bi$ we place blue squares of side length $\beta$ as in~\eqref{r:defRB}. We denote this model by $(\Ri, \lambda_r, \rho; \Bi, \lambda_b, \beta)$ and we note that, on scaling this model is equivalent to the model $(\Pc, \frac{\lambda_r}{\lambda_r + \lambda_b}, \rho,\beta)$.

\vspace{.5em}\noindent {\bf Transitivity condition:} 
\textit{Let $\rho$, $\beta$  and $\gamma$ be three positive fixed constants (not necessarily
	equal), and let  $\lambda_r$, $\lambda_b$ and $\lambda_g$ be the intensities of the three independent Poisson point processes red, blue and green, labeled $\Ri$, $\Bi$ and $\Gi$, respectively. Suppose red is  supercritical in the red/blue confetti model  $(\Ri, \lambda_r, \rho; \Bi, \lambda_b, \beta)$ and blue is supercritical in the blue/green confetti model $(\Bi, \lambda_b, \beta; \Gi, \lambda_g,  \gamma )$, then red is supercritical in the red/green confetti model $(\Ri, \lambda_r, \rho; \Gi, \lambda_g, \gamma )$.}\vspace{.5em} 

The transitivity condition is equivalent to
$$
\frac{\lambda_r}{\lambda_r+\lambda_b}> \lambda_c(\rho,\beta) \text{ and }
\frac{\lambda_b}{\lambda_b+\lambda_g}> \lambda_c(\beta,\gamma)
\Rightarrow\frac{\lambda_r}{\lambda_r+\lambda_g}> \lambda_c(\rho,\gamma) ,
$$
or equivalently,
\begin{align}
	\label{r-TC}
	\left(\frac{1}{\lambda_c(\rho,\beta)}-1\right)\left(\frac{1}{\lambda_c(\beta,\gamma)}-1\right)\leq \frac{1}{\lambda_c(\rho,\gamma)}-1,
\end{align}
for any $\rho$, $\beta$ and $\gamma$ constants, not necessarily equal.

	We believe that the transitivity condition holds and simulations suggest the veracity of our belief. More details, the source code etc. of the simulation are available at  \texttt{https://www.isid.ac.in/\char`\~rahul/index.php/source-code/}. Also, at the end of Section~\ref{Sec:caf}, we present a sufficient condition (Proposition \ref{r:suff}) to check the validity of the transitivity condition, along with a discussion on how this sufficient condition arose.

\begin{theorem}
	\label{th:cCAF1/2}
	The following are equivalent:
	\begin{itemize}
		\item[(i)] The transitivity condition~\eqref{r-TC} holds. 
		\item[(ii)] For any $\rho$, $\beta$ positive constants the critical covered area fraction for $(\Pc, \lambda, \rho,  \beta)$ equals $1/2$.
		\item[(iii)] For any $\rho$, $\beta$ positive constants in the confetti model $(\Pc, \lambda, \rho,  \beta)$  red percolates if  $\lambda > \frac{\beta^2}{\rho^2 + \beta^2}$ and blue percolates if  $\lambda< \frac{\beta^2}{\rho^2 + \beta^2}$, i.e., 
		$\lambda_c(\rho,\beta) = \frac{\beta^2}{\rho^2 + \beta^2}$.
	\end{itemize}
\end{theorem}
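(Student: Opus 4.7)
I would prove the three equivalences by first establishing (ii) $\Leftrightarrow$ (iii) via an explicit covered area fraction computation, then the immediate implication (iii) $\Rightarrow$ (i), and finally the substantive direction (i) $\Rightarrow$ (iii) by combining transitivity with the equal-size and duality results of Theorem~\ref{th:dual} and the scaling invariance of the confetti model.

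For (ii) $\Leftrightarrow$ (iii) I would compute the covered area fraction directly from the Poisson structure. For any fixed $x \in \R^2$, the points $X_i$ whose red squares cover $x$ are exactly those whose planar component lies in $x + [-\rho/2,\rho/2]^2$; projecting them onto the height axis yields a Poisson process on $(0,\infty)$ of rate $\lambda_r\rho^2$, and the blue-covering points similarly give rate $\lambda_b\beta^2$. Since $x$ is red precisely when the minimum-height covering square is red,
\[
\text{CAF}_{\text{red}}(p) = \frac{p\rho^2}{p\rho^2 + (1-p)\beta^2},
\]
which equals $1/2$ iff $p = \beta^2/(\rho^2+\beta^2)$, giving (ii) $\Leftrightarrow$ (iii). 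For (iii) $\Rightarrow$ (i), applying (iii) to each pair of sizes, the two hypotheses in the transitivity condition translate to $\lambda_r\rho^2 > \lambda_b\beta^2$ and $\lambda_b\beta^2 > \lambda_g\gamma^2$, which combine by transitivity of strict inequality into $\lambda_r\rho^2 > \lambda_g\gamma^2$, the criterion for red to percolate in the third model.

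The harder direction is (i) $\Rightarrow$ (iii). Assume $\lambda_r\rho^2 > \lambda_b\beta^2$ and seek red percolation in $(\lambda_r,\rho,\lambda_b,\beta)$. I would introduce an auxiliary colour of size $\rho$ with intensity $\lambda_m \in (\lambda_b\beta^2/\rho^2,\, \lambda_r)$: Theorem~\ref{th:dual}(ii) then gives that red beats this auxiliary in the equal-size model, and transitivity reduces the remaining task to showing that the auxiliary beats blue. Bootstrapping this chain with the scaling invariance of the confetti model — under which $p_c(\rho,\beta)$ depends only on the ratio $\beta/\rho$ — together with the duality $p_c(\rho,\beta) + p_c(\beta,\rho) = 1$ from Theorem~\ref{th:dual}(i), the transitivity condition forces the critical intensity ratio $r(\rho,\beta) = p_c/(1-p_c)$ to be a multiplicative function of $\beta/\rho$; combined with monotonicity this yields $p_c(\rho,\beta) = \beta^a/(\rho^a+\beta^a)$ for some fixed exponent $a > 0$.

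The main obstacle will be pinning down $a = 2$. The structural inputs above (transitivity, duality, equal-size result, scaling, monotonicity) only deliver the power-law form; identifying the exponent requires an additional geometric input specific to the two-dimensional confetti setting. I expect this last step to use the sharp transition from Proposition~\ref{theo1} together with a coupling argument comparing two confetti models having the same covered area fraction but distinct square sizes, thereby excluding any exponent other than $2$ and completing the implication (i) $\Rightarrow$ (iii).
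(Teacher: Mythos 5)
Your treatment of (ii) $\Leftrightarrow$ (iii) and of the easy implication towards (i) matches the paper: Proposition \ref{th:caf} gives exactly the formula $\text{CAF}_{\text{red}} = \lambda_r\rho^2/(\lambda_r\rho^2+\lambda_b\beta^2)$, and the transitivity condition then reduces to transitivity of the strict inequality $\lambda_r\rho^2 > \lambda_b\beta^2$. The problem is the converse direction (i) $\Rightarrow$ (ii)/(iii), where your proposal has a genuine gap that you yourself flag: you reduce to $p_c(\rho,\beta)=\beta^a/(\rho^a+\beta^a)$ for some unknown exponent $a$ and then say you ``expect'' a coupling argument to force $a=2$, without giving one. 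Nothing in your list of structural inputs (transitivity, duality, the equal-size result, scaling, monotonicity) can identify the exponent: all of them are preserved if one replaces side-lengths by a fixed power, so every $a>0$ is consistent with them, and the equal-size result only pins down the value at $\beta/\rho=1$, which every power law satisfies. The exponent $2$ must come from a quantitative geometric estimate, and that estimate is the actual content of this direction.

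The paper's route is different and is the piece you are missing. Assume transitivity and suppose for contradiction that red percolates in a model with $\lambda_r\rho^2<\lambda_b\beta^2$. Scaling reduces this to $(\Ri,1,1,\Bi,\mu,\nu)$ with $\sigma:=\mu\nu^2>1$, and the same scaling shows this model is equivalent to $(\Ri,\mu^{k-1},\nu^{k-1},\Bi,\mu^k,\nu^k)$ for every $k$; chaining the transitivity condition along $k=1,\dots,n$ then forces red to percolate in $(\Ri,1,1,\Bi,\mu^n,\nu^n)$ for every $n$. Since $\mu^n\nu^{2n}=\sigma^n\to\infty$, a direct estimate kills this for large $n$: one projects the Poisson points below a small height $h_0$ onto the plane to obtain independent continuum Boolean models and shows (separately for $\nu>1$ and $\nu<1$) that with probability close to $1$ the blue squares block every potential red horizontal crossing of a suitable rectangle, whence Proposition \ref{th:exp} rules out red percolation. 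The exponent $2$ enters precisely as ``intensity times area'' $=\mu^n\nu^{2n}$ in these Poisson coverage probabilities --- that is, it is the covered area fraction computation itself, applied along the scaled sequence of models, and not a separate coupling, that excludes any other exponent. As written, your implication (i) $\Rightarrow$ (iii) is therefore not established.
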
  

Also, as in the case of continuum percolation, for $\rho$ and $\beta$ not fixed constants, we have that the covered area fraction for red could be large without red percolating.

\begin{theorem}
	\label{th:cCAFrandom}
	For any $t \in (0,1)$, there exists a confetti model $(\Pc, \lambda, \rho, \beta)$, with $\rho$ and $\beta$ random,   for which $CAF_{\text{red}}(\lambda,\rho,\beta) > t$ but red does not percolate.
\end{theorem}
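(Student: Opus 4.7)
}

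The starting point is an explicit formula for the covered area fraction. Fix $\bx\in\R^2$. The red squares covering $\bx$ correspond, by the Poisson marking theorem applied to $(X_i,\rho_i,\varepsilon_i)$, to an independent Poisson process on the vertical axis $(0,\infty)$ of rate $\lambda_r\E[\rho^2]$ (integrating the indicator $\mathbbm 1\{\bx\in X_i(1{:}2)+[-\rho_i/2,\rho_i/2]^2\}$ over the plane yields the area $\rho_i^2$); the topmost red square covering $\bx$ is therefore exponential with this rate, and similarly the topmost blue is an independent $\mathrm{Exp}(\lambda_b\E[\beta^2])$. The point $\bx$ is red precisely when the former is smaller, so
\[\text{CAF}_{\text{red}}(\Ri,\Bi)=\frac{\lambda_r\E[\rho^2]}{\lambda_r\E[\rho^2]+\lambda_b\E[\beta^2]}.\]
The rest of the argument produces, for each target $t\in(0,1)$, a specific choice of $(\rho,\beta)$ and $(\lambda_r,\lambda_b)$ realising $\text{CAF}_{\text{red}}>t$ and $p<p_c(\rho,\beta)$.

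For the easy regime $t\in(0,1/2)$ I would take $\rho\stackrel{d}{=}\beta$, both non-degenerate and bounded. Then $\E[\rho^2]=\E[\beta^2]$ so $\text{CAF}_{\text{red}}=p$, and Theorem \ref{th:dual}(ii) gives $p_c(\rho,\beta)=1/2$. Choosing $p\in(t,1/2)$ yields both $\text{CAF}_{\text{red}}=p>t$ and no red percolation.

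For the harder regime $t\in[1/2,1)$, the identity $\text{CAF}_{\text{red}}=p$ under symmetry prevents CAF from exceeding $1/2$ without crossing $p_c$, so we must decouple $\E[\rho^2]$ from the "typical" scale of $\rho$, in analogy with the multi-scale construction used for continuum Boolean percolation. The proposed construction is a two-scale mixture: fix $r_0>0$, a small $q>0$ and a large $R\gg r_0$, and set $\rho=r_0$ with probability $1-q$ and $\rho=R$ with probability $q$, keeping $\beta=r_0$. Writing $\tau=q((R/r_0)^2-1)$, the formula gives $\text{CAF}_{\text{red}}=p(1+\tau)/(p\tau+1)$, which exceeds $t$ iff $p>t/(1+(1-t)\tau)$; choosing $\tau>(2t-1)/(1-t)$ places this threshold strictly below $1/2$. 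The core claim is then that for suitable $(q,R)$ the critical parameter satisfies $p_c(\rho,\beta)>t/(1+(1-t)\tau)$, after which any $p$ in the resulting open interval realises the desired model.

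The main obstacle is the quantitative lower bound on $p_c(\rho,\beta)$. By coupling with the symmetric sub-model $(\rho',\beta)=(r_0,r_0)$ obtained by relabelling every $\rho=R$ square as blue, stochastic monotonicity only gives the one-sided bound $p_c(\rho,\beta)\leq p_c(r_0,r_0)=1/2$. To obtain $p_c(\rho,\beta)\to 1/2$ as $q\to 0$ (with $\tau$ held above the required threshold by letting $R$ grow), one argues that subcritical blue percolation in the symmetric model is stable under the addition of a sparse set of large-scale red squares. Concretely I would couple the two Poisson processes and use the sharp phase transition of Proposition \ref{theo1} together with an RSW-style finite-range truncation (controlling the effect of the large-scale squares by conditioning on their location) to show that for every $\delta>0$ there exist parameters with $p_c(\rho,\beta)>1/2-\delta$. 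The hard part here is isolating a quantitative dependence of $p_c$ on $q$ uniformly in $R$; this is precisely the 2-colour analogue of the classical Meester--Roy phenomenon that critical covered volume fraction is not a universal constant, and is where the randomized-algorithm sharpness results developed earlier in this paper are expected to provide the required control.
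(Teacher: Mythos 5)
Your derivation of $\text{CAF}_{\text{red}}=\lambda_r\E[\rho^2]/(\lambda_r\E[\rho^2]+\lambda_b\E[\beta^2])$ is correct (it is Proposition \ref{th:caf} of the paper), and the regime $t<1/2$ is handled correctly. But for $t\geq 1/2$ the entire difficulty of the theorem is concentrated in your ``core claim'' $p_c(\rho,\beta)>t/(1+(1-t)\tau)$, which you do not prove, and which I believe fails for your construction when $t$ is close to $1$. The problem is quantitative: with $\tau=q((R/r_0)^2-1)$ held large, the rare big red squares are not negligible — projecting the Poisson points below height $s$ onto the plane, the big red squares form a Boolean model of effective intensity of order $p\tau s$, while the blue squares form one of effective intensity of order $(1-p)s$. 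Choosing $s$ so that the blue layer is still subcritical, the big-red layer is already highly supercritical once $p\tau$ exceeds a universal constant $C$, and the subcritical blue holes punched below it cannot disconnect it; so one expects red to percolate whenever $p\gtrsim C/\tau$. Since $\text{CAF}_{\text{red}}>t$ forces $p>t/((1-t)\tau)$, i.e.\ $p\tau>t/(1-t)$, which is arbitrarily large as $t\to 1$, there is no room left between the CAF threshold and $p_c$ for $t$ close to $1$. (Note also that if the critical CAF of your two-scale family happened to equal $1/2$ — the analogue of Theorem \ref{th:cCAF1/2}(ii) — then $p_c=1/(2+\tau)<t/(1+(1-t)\tau)$ for every $t>1/2$ and every $\tau$, so your construction would fail identically; you are implicitly assuming the conclusion of the theorem for your family.)

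The paper goes in the opposite direction: it makes the red squares \emph{small and dense} rather than large and sparse. Starting from a model with no red at all and a dense blue background (so that with probability $\geq 1-\delta$ every point of $R_N$ has a blue square within height $1$ above it), it repeatedly superposes an independent red process of squares of side $1/m$ and intensity $m^2\lambda_c(1)/2$. Each such layer adds $\lambda_c(1)/2$ to $\lambda_r\E[\rho^2]$ (so $q$ iterations drive the CAF above any $t$), but its projection onto the slab below height $1$ is a \emph{subcritical} Boolean model whose clusters have diameter $\leq 1/2n_0$ with high probability as $m\to\infty$; since the existing red components in $R_N$ are separated by gaps of width $\geq 1/n_0$, the new layer cannot create a red horizontal crossing of $R_N$, and Proposition \ref{th:exp} then kills red percolation. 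This iterated ``subcritical layer'' argument (Lemma \ref{lem:caf_random}) is the key idea your proposal is missing; without it, or a genuinely new quantitative lower bound on $p_c$ for your mixture, the proof does not close.
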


In the next four sections, we obtain the results for the confetti model. In Section~\ref{Voronoi}, we introduce the Voronoi model and provide a sketch of the modifications required for similar results of the Voronoi model.

	\section{Auxiliary results}
	\label{Sec:exp}
	
	Let $R_n$ be the rectangle $ [0,n]\times[0,3n] $ and $H_n$ be the event that there exists a horizontal red crossing of $R_n$, i.e., $R_n\cap C^{\text{red}}$ contains a connected component which intersects both the left edge $\{0\} \times[0,3n] $ and the right edge $\{n\} \times[0,3n] $ of the rectangle $R_n$. 
	
	We first have an analogue of Lemma 3.3 of Meester and Roy (1996). The proof being quite similar, we relegate it to the appendix. For $R$ as in Assumption~\ref{r:ass},
	\begin{prop}
		\label{th:exp}
		Consider the confetti  model $(\Pc, \lambda, \rho, \beta)$. There exists a constant $ \kappa_0\in(0,1)$ such that whenever  $\P_\lambda(H_N)<\kappa_0$  for some $N\geq R$,  we have 
		\[
		\P_\lambda(\text{diam} (C^{\text{red}}({\bf 0}))\geq a)\leq c_1\exp(-c_2a)
		\]
		for all $a > 0$ and for some positive constants $c_1$ and $c_2$ depending only on $\kappa_0$. 
	\end{prop}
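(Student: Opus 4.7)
The plan is to follow the architecture of Lemma 3.3 of Meester and Roy (1996) with modifications needed to handle the coloured nature of the confetti model. The key observation making this transplantation possible is that, under Assumption \ref{r:ass}, the colour at a point $\mathbf x\in\R^2$ is determined by the Poisson points and Bernoulli marks whose supporting squares can lie above $\mathbf x$, and since $\rho,\beta\leq R$ these are contained in a slab over $B(\mathbf x,R)$. Consequently, red crossings of two rectangles whose Euclidean distance exceeds $R$ are \emph{independent}, and this finite range is what will drive the renormalisation.

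First I would verify that $H_n$ is a monotone event in the natural sense: conditioning on the blue configuration $\{(X_i,\beta_i):\varepsilon_i=0\}$, the event that $R_n$ has a red horizontal crossing is increasing under the addition of red squares (an extra red square can only enlarge the red region as viewed from above). This yields an FKG inequality for red-increasing events, which together with the $R$-finite-range independence is all the combinatorial input needed.

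Next, take $N\geq R$ and set up a one-step renormalisation at scale $3N$. A red horizontal crossing of the enlarged rectangle $[0,3N]\times[0,9N]$ can be produced by gluing red horizontal crossings of three stacked $N\times 3N$ subrectangles together with red vertical crossings of suitable connecting $3N\times N$ subrectangles, each arranged so that non-adjacent sub-crossings use independent regions of $\R^2$ (thanks to the $R$-bound and $N\geq R$). The FKG inequality then produces an estimate of the form $\P_p(H_{3N})\leq g(\P_p(H_N))$ with $g$ a polynomial satisfying $g(0)=g'(0)=0$; this is exactly the mechanism behind the choice of $\kappa_0$ in Meester and Roy (1996), and we inherit the same numerical threshold. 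Iterating gives $\P_p(H_{3^kN})\leq \kappa_0\cdot 2^{-k}$, hence exponential decay of $\P_p(H_n)$ in $n$.

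Finally, from this exponential decay of sponge crossings one obtains exponential decay of $\mathrm{diam}(C^{\text{red}}(\mathbf 0))$ by the usual annulus argument: if $\mathrm{diam}(C^{\text{red}}(\mathbf 0))\geq a$, then for each of $\lfloor a/(12N)\rfloor$ disjoint concentric annuli of width $3N$ around $\mathbf 0$ there must be a red crossing, and the crossings of these annuli are independent by the finite-range property; combining the bounds produces the desired estimate $\P_p(\mathrm{diam}(C^{\text{red}}(\mathbf 0))\geq a)\leq c_1 e^{-c_2 a}$. The main obstacle is the bookkeeping in the renormalisation step: cells cannot be treated as genuinely disjoint because squares centred near a cell boundary may extend into the neighbour, so one must either inflate cells by $R$ (permissible since $N\geq R$) or insert thin buffer strips, and then verify that FKG plus independence still give a clean recursion. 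This is precisely the same adjustment made in the Boolean setting of Meester and Roy (1996), which is why their $\kappa_0$ can be imported verbatim.
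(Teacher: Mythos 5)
Your overall strategy (a multi-scale recursion on crossing probabilities followed by an annulus argument) is a legitimate alternative to what the paper actually does, but the central step as you describe it would fail because the inequality runs the wrong way. You propose to obtain $\P_p(H_{3N})\leq g(\P_p(H_N))$ with $g(0)=g'(0)=0$ by \emph{gluing} sub-crossings via FKG. FKG for increasing events gives $\P(A\cap B)\geq \P(A)\P(B)$, i.e.\ a \emph{lower} bound on the probability of producing a long crossing from short ones; that is the supercritical direction and is useless for showing that small $\P_p(H_N)$ forces $\P_p(H_{3N})$ to be even smaller. The correct mechanism for the subcritical recursion is the converse implication: a crossing of the large rectangle \emph{forces} the existence of crossings of at least two sub-rectangles lying in regions separated by more than $R$, which are then independent by the finite-range property, so a union bound over the finitely many admissible pairs gives $\P_p(H_{3N})\leq C\,\P_p(H_N)^2$. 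No FKG is involved. Relatedly, your claim that "we inherit the same numerical threshold" $\kappa_0$ from Meester and Roy is unjustified under your scheme: their $\kappa_0$ is tied to their specific argument, and a recursion of the form $x\mapsto Cx^2$ produces its own threshold $1/C$ depending on the combinatorial constant $C$ of the decomposition.

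For comparison, the paper does not use a recursion at all. It defines a dependent site percolation on $\Z^2$ in which a vertex $z$ is open when a red component crosses the annulus between an $N\times N$ box $D_0(z)$ and the complement of the concentric $3N\times 3N$ box $D_1(z)$, so that $\P(z\text{ open})\leq 4\P_p(H_N)$; the condition $N\geq R$ makes vertex states independent at $\ell_\infty$-distance $\geq 5$. Then $\mathrm{diam}(C^{\text{red}}(\mathbf 0))\geq a$ forces the open site cluster of the origin to contain at least $\lceil a/(\sqrt 2 N)\rceil$ vertices, and a Peierls count (at most $(9e)^n$ connected $n$-sets containing the origin, of which at least $n/11^2$ have mutually independent states) gives exponential decay once $\kappa_0<\tfrac{1}{4(9e)^{121}}$. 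This is a one-shot counting argument rather than an iteration, and it is where the explicit $\kappa_0$ comes from. Your preliminary observations (the $R$-range independence and the monotonicity of the red region under addition of red squares) are correct and are indeed the inputs the paper uses, but you would need to replace the FKG-gluing step by the forced-independent-subcrossings argument, or switch to the paper's Peierls count, to make the proof go through.
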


	Let $\boldsymbol \omega \in \Xi_1\times \Xi_2 \times  \Xi_3$ and (with a slight abuse of notation) let $\boldsymbol\varepsilon \in  \Xi_4$. Thus
	$(\boldsymbol \omega, \boldsymbol\varepsilon)$ is a sample point from the space $\Xi$. 
	
	We take $\boldsymbol \omega$ to stand for three sequences 
	\begin{equation}
		\label{omega}
		\text{\parbox{.885\textwidth}{(i) $(x_1, x_2, \ldots )$, $x_i \in \R^2 \times (0, \infty)$, a realisation of the Poisson point process, \\
				(ii) $(r_1, r_2, \ldots )$ with $r_i > 0$, a realisation of the process $\boldsymbol \rho$, \\
				(iii) $ (b_1, b_2, \ldots )$ with $b_i > 0$, a realisation of the process $\boldsymbol \beta$.
		}}
	\end{equation}

	\begin{defn} 
    	An event $A \in \Hi$ is called \textbf{increasing} if $(\boldsymbol \omega , \boldsymbol \varepsilon)\in A$  implies $(\boldsymbol \omega, \boldsymbol{\varepsilon'})\in A$ for $\boldsymbol {\varepsilon'}\geq \boldsymbol{\varepsilon}$ (i.e.,  $\varepsilon'_j\geq \varepsilon_j$ for all $j\in\N$). $A$ is \textbf{decreasing} if $A^{\complement}$ is increasing.
	\end{defn}
	\noindent For increasing events, standard methods (see Meester and Roy (1996),  Bollob\'{a}s and Riordan (2006b)) yield

	\vspace{.5em}\noindent {\bf FKG inequality:} 
	\textit{For $ A $ and $ B $, both increasing or both decreasing events, we have $ \P_\lambda(A \cap B) \geq
		\P_\lambda(A)\P_\lambda(B) $. }\vspace{.5em} 
		 
	\noindent The following corollary affirms our claim in Remark \ref{nonremark}:	
	
\begin{cor}
\label{Nontrivial} For all $\rho$ and $ \beta$ positive random variables bounded above by $R$,     we have 
$$0<\lambda_c(\rho,\beta)<1.
$$
\end{cor}
\begin{proof}
		We fix $N\geq R$ and choose $\lambda_1\in(0,1)$ small such that $\P_{\lambda_1}(H_N)<\kappa_0$. Here, $\kappa_0$ is as in Proposition~\ref{th:exp}. Therefore by Proposition~\ref{th:exp},
	\[
	\P_{\lambda_1}(\text{diam} (C^{\text{red}}({\bf 0}))\geq a)\leq c_1\exp(-c_2a)
	\]
	for all $a > 0$ and for some positive constants $c_1$ and $c_2$. This immediately implies $\theta(\lambda_1)=0$ which yields $\lambda_c(\rho,\beta)\geq \lambda_1>0$.
	
	Now choose $\lambda_2\in(0,1)$ large enough such that 
	\begin{equation*}
		\P_{\lambda_2} \left(\text{there exists a  horizontal blue crossing of } R_N \right)<\kappa_0.
	\end{equation*}
	Because of the symmetry of the model, Proposition~\ref{th:exp} implies
	\[
	\P_{\lambda_2}(\text{diam} (C^{\text{blue}}({\bf 0}))\geq a)\leq c_3\exp(-c_4a)
	\]
	for all $a > 0$ and for some positive constants $c_3$ and $c_4$. The FKG inequality yields
	\begin{equation*}
		\P_{\lambda_2}(\text{diam} (C^{\text{blue}}([0,1]^2))\geq a)\leq c_5\exp(-c_6a)
	\end{equation*}
	for all $a > 0$ and for some positive constants $c_5$ and $c_6$. Here, for a set $S\subset \R$, $C^{\text{blue}}(S)$ represents the union of all the  connecting components of $C^{\text{blue}}$ that intersect $S$. Thus, for any $n\in\N$,
	\begin{align}
		&\P_{\lambda_2} \left(\text{there exists a  horizontal blue crossing of } R_n \right)\nonumber\\
		\leq\,\,& \sum_{i=1}^{3n} 	\P_{\lambda_2}(\text{diam} (C^{\text{blue}}([0,1]\times[i-1,i]))\geq n) 
		\leq 3nc_5\exp(-c_6n).     \label{Equ:horizontal_blue_crossing}
	\end{align}
	Now, consider the sequence of rectangles $\{Q_i\}_{i\geq 1}$ defined as
	\begin{equation*}
		Q_i=\begin{cases}
			[0,3^i]\times [0,3^{i+1}] & \text{ if $i$ is odd;}\\
			[0,3^{i+1}]\times [0,3^i] & \text{ if $i$ is even.}
		\end{cases}
	\end{equation*}
	By~\eqref{Equ:horizontal_blue_crossing}, we have
	\begin{align*}
		&\sum_{i=1}^{\infty}\P_{\lambda_2} \left(\text{there exists a blue path in $Q_i$ connecting its longest sides} \right)\\
		\leq\,\,& \sum_{i=1}^{\infty} 3^{i+1}c_5\exp(-c_63^i)<\infty.
	\end{align*}
	Therefore, by Borel-Cantelli lemma, there is almost surely a finite number of $i$ for which there is a blue path in $Q_i$ connecting its longest sides, which then implies that with probability 1, for all $i$ except a finitely many, there is a red path in $Q_i$ connecting its shortest sides. By construction of $\{Q_i\}_{i\geq 1}$, this immediately implies that there is an infinite red path in the confetti model $(\Pc,\lambda_2,\rho,\beta)$ almost surely, and hence $\lambda_c(\rho,\beta)\leq \lambda_2<1$.
\end{proof}

	For a fixed $\boldsymbol\omega$ and $a > 0$, let 
	\begin{align*}
		D_{ \boldsymbol\omega }(a) : = &\{i: \text{ there exists  }\mathbf{y}=(y_1,y_2,y_3) \in \left(\mathbf{R}_i\cup\mathbf{B}_i\right) \cap\left([-a,a]^2\times\R  \right)  \\
		& \text{ such that } (y_1, y_2,s) \not\in \mathbf{R}_j\cap\mathbf{B}_j\text{ for any }j 
		\text{ and  all } 0 < s < y_3 \}, 
	\end{align*} 
	where $\mathbf{R}_i$ and $\mathbf{B}_i$ are as in~\eqref{r:defRB}. 
		In other words, $D_{ \boldsymbol\omega }(a)$ is precisely the set of indices of all Poisson points under $\boldsymbol\omega$ whose colour may influence the colouring of $[-a,a]^2$.

	\begin{theorem}
		\label{dchi}
		For any $a>0$, we have $\E\left[ |D_{ \boldsymbol\omega }(a)|  \right]<\infty$.
	\end{theorem}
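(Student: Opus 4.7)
My plan is to dominate $\#D_{\boldsymbol\omega}(n)$ by the number of Poisson points in an explicit finite box and then bound that expectation. Any $i\in D_{\boldsymbol\omega}(n)$ has enclosing square $\mathbf{R}_i\cup\mathbf{B}_i$ of side $\leq R$ meeting $[-n,n]^2\times\R$, so the spatial center $(X_i(1),X_i(2))$ lies in the enlarged window $A:=[-n-R/2,n+R/2]^2$. I introduce the random time
\[
\tilde T := \inf\bigl\{t>0 : \forall\,(y_1,y_2)\in[-n,n]^2\ \exists\,j \text{ with } 0<X_j(3)\leq t \text{ and } (y_1,y_2,X_j(3))\in\mathbf{R}_j\cap\mathbf{B}_j\bigr\},
\]
the first height by which $[-n,n]^2$ is entirely blocked by ``intersection'' squares $\mathbf{R}_j\cap\mathbf{B}_j$. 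Read off from the definition of $D_{\boldsymbol\omega}(n)$, any $i$ with $X_i(3)>\tilde T$ has every candidate witness $(y_1,y_2,X_i(3))\in(\mathbf{R}_i\cup\mathbf{B}_i)\cap([-n,n]^2\times\R)$ already intercepted by a lower $\mathbf{R}_j\cap\mathbf{B}_j$, so $i\notin D_{\boldsymbol\omega}(n)$. Hence every index in $D_{\boldsymbol\omega}(n)$ corresponds to a Poisson point lying in $A\times(0,\tilde T]$, and $\#D_{\boldsymbol\omega}(n)\leq N(A\times(0,\tilde T])$, where $N$ is the Poisson counting measure. It suffices to prove $\E[N(A\times(0,\tilde T])]<\infty$.

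Since $\{\tilde T\leq t\}$ depends only on the Poisson points of height $\leq t$ and their marks, $\tilde T$ is a stopping time for the height-filtration $(\mathcal F_t)_{t\geq 0}$. The process $(N(A\times(0,t]))_{t\geq 0}$ is a Poisson process of rate $\Lambda|A|$ with $\Lambda:=\lambda_r+\lambda_b$, so Wald's identity (equivalently, optional stopping applied to the compensated martingale $N(A\times(0,t])-\Lambda|A|t$) yields
\[
\E[N(A\times(0,\tilde T])]\;=\;\Lambda|A|\,\E[\tilde T],
\]
provided $\E[\tilde T]<\infty$.

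The remaining step is the tail bound $\E[\tilde T]=\int_0^\infty\P(\tilde T>t)\,dt<\infty$. Since $\rho,\beta>0$ almost surely (Assumption~\ref{r:ass}), I choose $\delta_0>0$ with $p_0:=\P(\rho>2\delta_0)\,\P(\beta>2\delta_0)>0$, and partition $[-n,n]^2$ into $O(n^2/\delta_0^2)$ cells of side $\delta_0$. For each cell $Q$, a Poisson leaf with $\min(\rho_j,\beta_j)>2\delta_0$ and center in the concentric $\delta_0$-sub-square of $Q$ satisfies $(y_1,y_2,X_j(3))\in\mathbf{R}_j\cap\mathbf{B}_j$ for all $(y_1,y_2)\in Q$; the thinned intensity of such ``good'' leaves at heights $\leq t$ is at least $\Lambda t\,\delta_0^2 p_0$, so the probability no good leaf covers $Q$ is at most $\exp(-\Lambda t\,\delta_0^2 p_0)$. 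A union bound then gives $\P(\tilde T>t)\leq C(n,\delta_0)\exp(-\Lambda t\,\delta_0^2 p_0)$, which is integrable in $t$. Combining the pieces yields $\E[\#D_{\boldsymbol\omega}(n)]\leq \Lambda|A|\,\E[\tilde T]<\infty$. The one subtle point is the coverage bound without a positive lower bound on the size distribution, which I handle by restricting to the positive-probability event $\{\rho>2\delta_0\}\cap\{\beta>2\delta_0\}$.
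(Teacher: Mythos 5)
Your proof is correct and follows essentially the same strategy as the paper's: both exploit that $\min\{\rho,\beta\}$ exceeds a fixed positive threshold with positive probability to produce a ``blocking height'' with exponential tail, and then bound $\#D_{\boldsymbol\omega}(n)$ by the number of Poisson points lying below that height. The only difference is organizational --- the paper works cylinder by cylinder with a local blocking height and a conditional-expectation computation, whereas you use a single global blocking time for all of $[-n,n]^2$ together with Wald's identity --- and your global version has the minor advantage of automatically handling a leaf whose only visible witness lies over a cell other than the one containing its centre.
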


		\begin{proof}
			Since $\rho, \beta>0$, we can get $\delta > 0$ such that $\alpha=\P(\min\{\rho, \beta\}>\delta)>0$. We Take $Q= [-\delta/4, \delta/4]^2$ and $H$  the height of the lowest point $X_j\in Q\times (0,\infty)$ for which $\min\{\rho_j,\beta_j\}>\delta$. Since such vertices $X_j$ form a Poisson point process of intensity $\frac{\alpha\delta^2}{4}$, we have
			$\E[H] = \frac{4}{\alpha\delta^2}$. 
			
			Now,
			\begin{align*}
				\Big|D_{ \boldsymbol\omega }\Big(\frac{\delta}{4}\Big)\Big|&\leq 1+\left|\left\{ X_i\in Q\times (0,H) : \min\{\rho_i,\beta_i\}\leq\delta\right\} \right|\\
				&\qquad\qquad +\left|\left\{ X_i\in \left([-R-1,R+1]^2\setminus Q\right)\times (0,H) \right\} \right|,
			\end{align*}
			where $R$ is as in Assumption~\ref{r:ass}. Hence
			$$
			\E\Big[ \Big|D_{ \boldsymbol\omega }\Big(\frac{\delta}{4}\Big)\Big| \Big]
			\leq 1+ (1-\alpha)\cdot\frac{\delta^2}{4}\cdot\E[H]+ 4(R+1)^2\cdot\E[H] <\infty.
			$$
			Now, by choosing $\delta$ reciprocal of an integer and then by tiling the square $[-\lceil a\rceil,\lceil a \rceil]^2$ with squares of the form  $[0,\delta/2]^2$, we obtain the theorem.
	\end{proof}

	\begin{defn}	
			An event $A$ is called \textbf{local} if there exist $n\in\N$ such that  $1_A$  is determined by the colouring of $[-n,n]^2$, i.e., $(\boldsymbol{\omega},\boldsymbol{\varepsilon}) \in A$ implies $(\boldsymbol{\omega},\boldsymbol{\varepsilon'}) \in A$ for all $\boldsymbol \varepsilon'$ such that $\varepsilon'_i = \varepsilon_i$ for  $i \in D_{ \boldsymbol\omega }(n)$.
	\end{defn}
	\begin{defn} 
			For $\boldsymbol{\omega}$ as in~\eqref{omega},
			$x_i$ is called \textbf{pivotal} for $(\boldsymbol{\omega},\boldsymbol{\varepsilon},A)$ if $1_A(\boldsymbol{\omega},\boldsymbol{\varepsilon})\neq 1_A(\boldsymbol \omega,\boldsymbol{\varepsilon^{(i)}})$, where $\boldsymbol{\varepsilon^{(i)}}$ is such that $\varepsilon^{(i)}_i=1-\varepsilon_i$ and $\varepsilon^{(i)}_j=\varepsilon_j$ for all $j\neq i$. 
	\end{defn}
	We shall denote the set of all pivital points for an event $A$ by $\text{Piv}_A$. We use the following theorem (see Exercise~19.3 of Last  and Penrose (2018)):
	\begin{theorem}[Russo's formula]
		\label{russo}
		Let $A$ be a local increasing event. Then $\lambda\mapsto\mathbb{P}_\lambda(A)$ is differentiable and
		\[
		\frac{d}{d\lambda}\mathbb{P}_\lambda(A)=\mathbb{E}_\lambda\left[|\text{Piv}_A| \right],
		\]
		where $|\text{Piv}_A|$ stands for the number of pivotal points for the event $A$.	
	\end{theorem}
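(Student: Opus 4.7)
The plan is to condition on the Poisson environment $\boldsymbol{\omega}$ and reduce to the classical Russo formula for a finite collection of independent Bernoullis, then integrate out $\boldsymbol{\omega}$. The key observation is that, by the definition of locality, if $A$ is determined by the colouring of $[-n,n]^2$, then $\mathbf{1}_A(\boldsymbol{\omega},\boldsymbol{\varepsilon})$ depends, for each fixed $\boldsymbol{\omega}$, only on the finite collection $\{\varepsilon_i : i \in D_{\boldsymbol{\omega}}(n)\}$. Indeed, any index $j \notin D_{\boldsymbol{\omega}}(n)$ corresponds to a square $\mathbf{R}_j \cup \mathbf{B}_j$ that is fully occluded (over $[-n,n]^2$) by squares lying strictly below it, so flipping $\varepsilon_j$ leaves the colouring of $[-n,n]^2$ unchanged.

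First I would write $\mathbb{P}_p(A) = \mathbb{E}[P_p^{\boldsymbol{\omega}}(A)]$ where $P_p^{\boldsymbol{\omega}}(A) := \mathbb{P}_p(A \mid \boldsymbol{\omega})$. For almost every $\boldsymbol{\omega}$, the set $D_{\boldsymbol{\omega}}(n)$ is finite (Theorem \ref{dchi}), so $P_p^{\boldsymbol{\omega}}(A)$ is a polynomial in $p$ of degree at most $N_{\boldsymbol{\omega}} := \#D_{\boldsymbol{\omega}}(n)$, and $A$ remains increasing in the coordinates $\{\varepsilon_i\}_{i \in D_{\boldsymbol{\omega}}(n)}$. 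The classical Russo formula for Bernoulli product measures on a finite index set then gives
\[
\frac{d}{dp} P_p^{\boldsymbol{\omega}}(A) = \mathbb{E}_p\bigl(|\text{Piv}_A| \,\big|\, \boldsymbol{\omega}\bigr),
\]
where only indices in $D_{\boldsymbol{\omega}}(n)$ can be pivotal, since changing $\varepsilon_j$ with $j \notin D_{\boldsymbol{\omega}}(n)$ does not affect $\mathbf{1}_A$.

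Next I would justify interchanging $\frac{d}{dp}$ with the expectation over $\boldsymbol{\omega}$. Because at most $N_{\boldsymbol{\omega}}$ indices can be pivotal, we have the deterministic bound $|\text{Piv}_A| \leq N_{\boldsymbol{\omega}}$, and by Theorem \ref{dchi} the envelope $N_{\boldsymbol{\omega}}$ is integrable. Likewise the difference quotient
\[
\frac{P_{p+h}^{\boldsymbol{\omega}}(A) - P_p^{\boldsymbol{\omega}}(A)}{h}
\]
is bounded in absolute value, uniformly in $h$ small, by the supremum of $|\frac{d}{dp} P_q^{\boldsymbol{\omega}}(A)|$ over $q$ in a neighbourhood of $p$, which in turn is bounded by $N_{\boldsymbol{\omega}}$. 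Dominated convergence then yields
\[
\frac{d}{dp}\mathbb{P}_p(A) = \mathbb{E}\!\left[\frac{d}{dp} P_p^{\boldsymbol{\omega}}(A)\right] = \mathbb{E}_p\bigl(|\text{Piv}_A|\bigr).
\]

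The only nontrivial step is the interchange of derivative and expectation, and the main obstacle there is controlling the (random) number of relevant Bernoullis; this is exactly what Theorem \ref{dchi} is designed to handle by providing the integrable dominator $N_{\boldsymbol{\omega}}$. Everything else is a conditional application of the standard finite-dimensional Russo formula.
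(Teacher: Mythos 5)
Your proposal is correct and follows essentially the same route as the paper: condition on the Poisson environment $\boldsymbol{\omega}$, apply the finite-dimensional Russo formula to the finitely many relevant Bernoullis indexed by $D_{\boldsymbol{\omega}}(n)$ (the paper re-derives this step via a one-coordinate coupling rather than citing it), and then interchange $\frac{d}{dp}$ with $\mathbb{E}[\,\cdot\,]$ by dominated convergence using the integrable dominator $\#D_{\boldsymbol{\omega}}(n)$ from Theorem \ref{dchi}. No substantive differences.
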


	We will also need the following two results from Duminil-Copin, Raoufi and Tassion (2019a).
	\begin{theorem}[OSSS inequality]
		\label{osss}
		Let $\left( \Omega^{\infty},\bigotimes_{i\in \mathbb{N}}P \right)$ be a product probability space, and $f: \Omega^{\infty}\rightarrow\{0,1\}$. An algorithm $T$ determining $f$ takes a configuration $\omega=\left(\omega_i\right)_{i\in \mathbb{N}}\in \Omega^{\infty}$ as an input, and it reveals the values of $\omega_i$'s one by one. At each step, which co-ordinate of $\omega$ will be revealed next  depends on the values of the co-ordinates revealed so far. The algorithm stops as soon as $f$ is determined. Assuming that the algorithm $T$ determines $f$ in finite steps almost surely, we have
		\[ \mbox{Var}(f)\leq \sum_{i=1}^{\infty}\delta_i(T)\mbox{Inf}_i(f),  \] 
		where $\delta_i(T)$ and $\mbox{Inf}_i(f)$ are respectively the revealment and the influence of the $i$th co-ordinate defined by
		\begin{align*}
			\delta_i(T)&= \bigotimes_{j\in \mathbb{N}}P \left(T \mbox{ reveals the value of } \omega_i \right)\\
			\mbox{Inf}_i(f)&=\bigotimes_{j\in \mathbb{N}}P \left(f(\omega)\neq f(\omega^{(i)}) \right).		
		\end{align*}
		
		The above $\omega^{(i)}$ represents the random element in $\Omega^{\infty}$  which is the same as $\omega$ in every co-ordinate except the $i$th co-ordinate which is resampled independently. 
	\end{theorem}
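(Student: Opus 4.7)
The plan is to prove the OSSS inequality via an explicit coupling combined with a telescoping identity, in the style of O'Donnell--Saks--Schramm--Servedio. I would take two independent copies $\omega$ and $\omega^{\ast}$ of the product-measure configuration and begin from the standard identity
\[
\mathrm{Var}(f)=\mathbb{E}\bigl[f(\omega)\bigl(f(\omega)-f(\omega^{\ast})\bigr)\bigr].
\]
The idea is to interpolate between $\omega^{\ast}$ and a configuration that agrees with $\omega$ on every coordinate that $T$ actually inspects when run on $\omega$. Running $T$ on $\omega$ produces a random, almost surely finite sequence of queried coordinates $J_1,J_2,\ldots,J_{\tau}$, and I would define a chain $\omega^{(0)}=\omega^{\ast}$, with $\omega^{(k)}$ obtained from $\omega^{(k-1)}$ by replacing its $J_k$-th coordinate by $\omega_{J_k}$. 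Since $T$ determines $f$ and $\omega^{(\tau)}$ agrees with $\omega$ on every coordinate inspected by $T$, one has $f(\omega^{(\tau)})=f(\omega)$.

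The telescoping identity
\[
f(\omega)-f(\omega^{\ast})=\sum_{k=1}^{\tau}\bigl(f(\omega^{(k)})-f(\omega^{(k-1)})\bigr)
\]
then lets me re-group contributions according to which coordinate is being swapped. Because each coordinate is queried at most once along the run, the sum over $k$ becomes a sum over $i\in\mathbb{N}$ of terms supported on the event $\{T\text{ reveals }i\}$, whose probability is by definition $\delta_i(T)$. On this event the single swap at the corresponding step $k$ replaces $\omega^{\ast}_i$ with $\omega_i$, i.e.\ resamples the $i$-th coordinate from an independent copy, so the expected absolute value of the associated increment is controlled by the probability that coordinate $i$ is pivotal for $f$ under a fresh resample, which is precisely $\mathrm{Inf}_i(f)$.

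Multiplying through by $f(\omega)\in\{0,1\}$, taking expectations, and using $|f(\omega)|\le 1$ (or Cauchy--Schwarz, which gives the same bound for $\{0,1\}$-valued functions) then yields
\[
\mathrm{Var}(f)\leq \sum_{i=1}^{\infty}\delta_i(T)\,\mathrm{Inf}_i(f),
\]
as required. The main obstacle is the measure-theoretic bookkeeping behind the coupling: one needs to verify that, conditionally on the history $(J_1,\ldots,J_k)$ together with the values already revealed on those coordinates, the unqueried coordinates of $\omega$ (and of $\omega^{\ast}$) still carry the original product law, independent of the revealed history. This is what legitimises identifying the swap probability with $\delta_i(T)$ and the conditional flip probability with $\mathrm{Inf}_i(f)$; it hinges on the product structure of $(\Omega^{\infty},\bigotimes_{i}P)$ together with the fact that each query $J_k$ is measurable with respect to the previously revealed coordinates alone.
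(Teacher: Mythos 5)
You should first note that the paper does not prove this statement at all: it is explicitly \emph{quoted} from Duminil-Copin, Raoufi and Tassion, so the comparison below is with the standard OSSS argument. Your overall strategy (two independent copies, interpolation along the decision tree's query sequence, telescoping) is the right one, but your interpolation runs in the wrong direction, and that is exactly where the product $\delta_i(T)\,\mathrm{Inf}_i(f)$ is lost. In your chain, $\omega^{(k-1)}$ carries the \emph{queried} values of $\omega$ and the \emph{unqueried} values of $\omega^{\ast}$. Conditionally on the history $(J_1,\omega_{J_1},\dots,J_{k-1},\omega_{J_{k-1}})$ and on $\{J_k=i\}$, the coordinates $\omega_{J_1},\dots,\omega_{J_{k-1}}$ are frozen, so $\omega^{(k-1)}$ has the product law \emph{conditioned on the revealed cylinder}, not the product law. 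The expected increment given that $i$ is revealed is therefore the influence of $i$ conditioned on the history, and summing over histories your chain yields at best
\[
\mathrm{Var}(f)\le\sum_i\P\bigl(\{T\text{ reveals }i\}\cap\{f(\eta)\ne f(\eta^{(i)})\}\bigr),
\]
with both events evaluated on a common product configuration $\eta$. Each summand is at most $\min(\delta_i,\mathrm{Inf}_i)$ but is \emph{not} bounded by $\delta_i\,\mathrm{Inf}_i$: for $f=\omega_1\wedge\omega_2$ with both coordinates Bernoulli$(q)$ and the tree querying coordinate $1$ first, the revealment and pivotality events for coordinate $2$ both force $\omega_1=1$, so your bound for that coordinate is $\mathrm{Inf}_2\approx 2q^2$ while $\delta_2\mathrm{Inf}_2\approx 2q^3$. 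The factor $\delta_i$ — the whole point of OSSS, and what the paper needs in order to extract $\sum_k\theta_k$ from the revealments — is lost. The measure-theoretic fact you flag (unqueried coordinates retain the product law given the history) is true but does not repair this, because your hybrid also contains the queried, hence conditioned, coordinates of $\omega$.

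The fix is to interpolate the other way: let $\omega^t$ agree with the fresh copy $\omega^{\ast}$ on the coordinates $J_1,\dots,J_t$ already revealed by running $T$ on $\omega$, and with $\omega$ on the not-yet-revealed ones. Then, conditionally on the history up to step $t-1$, $\omega^{t-1}$ is an \emph{unconditioned} product sample independent of that history (fresh values sit precisely on the conditioned coordinates), and on $\{J_t=i\}$ the pair $(\omega^{t-1},\omega^{t})$ is distributed exactly as $(\eta,\eta^{(i)})$; hence the $t$-th increment contributes $\P(J_t=i)\,\mathrm{Inf}_i(f)$, and summing over $t$ (the events $\{J_t=i\}$ being disjoint) gives $\delta_i(T)\,\mathrm{Inf}_i(f)$. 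The price is paid at the endpoints: now $f(\omega^{\tau})\ne f(\omega)$ in general; instead one observes that $\omega^{\tau}$ is product-distributed and independent of the $\sigma$-field generated by the run of $T$ on $\omega$, which determines $f(\omega)$, so $f(\omega^{\tau})$ is an independent copy of $f(\omega)$ and $\E\abs{f(\omega^{0})-f(\omega^{\tau})}=2\,\mathrm{Var}(f)\ge\mathrm{Var}(f)$. (Separately, the influence in the paper's statement should read $\P\bigl(f(\omega)\ne f(\omega^{(i)})\bigr)$; the ``$=$'' is a typo, and your proposal correctly works with the pivotality version.)
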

	\begin{lemma}
		\label{copin}
			Consider a converging sequence of increasing differentiable functions $f_n:\left(a,b\right)\rightarrow \left(0,M\right)$ satisfying 
			$$
			f_n'\geq \frac{n}{\sum_{k=0}^{n-1}f_k} f_n
			$$
			for all $n\geq1$. Then, there exists $x_0\in \left[a,b\right]$ such that the following holds:
			\begin{itemize}
				\item[(i)] For all $x \in \left(a,x_0\right)$,  there exists $c_x>0$ such that for any $n$ large enough,  $ f_n(x)\leq M\exp(-c_xn)$.
				\item[(ii)] For all $x \in \left(x_0,b\right)$, $f=\lim\limits_{n\rightarrow\infty}f_n$ satisfies $f(x)\geq x-x_0$.
			\end{itemize}
	\end{lemma}

	\section{Proof of Theorem~\ref{th:dual}}
	\label{r:equality}
	
		We first state a result needed for the proof of Proposition~\ref{theo1}. Recall
		$\theta_n(\lambda)=\mathbb{P}_\lambda(C^{\text{red}}({\bf 0}) \cap \delta ([-n,n]^2) \neq \emptyset)$. 
	\begin{lemma}
		\label{lem2}
		For any $\zeta\in (0,1)$, there exists a constant $u_{\zeta}$ such that for all $\lambda\in (0,\zeta)$ and all $n \geq 1$,
		\[
		\frac{d}{d\lambda}\theta_n(\lambda)\geq \frac{n}{\sum_{k=0}^{n-1}\theta_k(\lambda)}\,u_{\zeta}\,\theta_n(\lambda).
		\]
	\end{lemma}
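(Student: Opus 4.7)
The plan is to adapt the strategy of Duminil-Copin, Raoufi, Tassion (2019a) to the confetti setting, by combining Russo's formula (Theorem \ref{russo}), the OSSS inequality (Theorem \ref{osss}), and a well-chosen family of exploration algorithms $T_k$ indexed by $k\in\{0,1,\ldots,n-1\}$. Write $A_n = \{C^{\text{red}}(\mathbf 0)\cap S_n\neq\emptyset\}$, obtained as the monotone limit of the local increasing events $A_n^{(M)} = \{\mathbf 0 \leftrightarrow S_n \text{ by a red path contained in } B(\mathbf 0,M)\}$ as $M\uparrow\infty$. Russo's formula applied to $A_n^{(M)}$ gives $\tfrac{d}{dp}\P_p(A_n^{(M)}) = \E_p|\text{Piv}_{A_n^{(M)}}|$, and since the $i$th OSSS coordinate is the Bernoulli colour $\varepsilon_i$, a short coupling computation yields $\text{Inf}_i(\1_{A_n^{(M)}}) = 2p(1-p)\,\P_p(X_i\in\text{Piv}_{A_n^{(M)}})$, so that $\sum_i \text{Inf}_i(\1_{A_n^{(M)}}) = 2p(1-p)\,\tfrac{d}{dp}\P_p(A_n^{(M)})$.

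For each $k$, the algorithm $T_k$ is designed, conditional on the Poisson data $(\Pc,\boldsymbol\rho,\boldsymbol\beta)$, to iteratively uncover the red connected component of the shell $S_k$. It first determines the colouring along $S_k$ (piecewise constant with finitely many pieces almost surely, each piece decided by revealing the $\varepsilon$ of the lowest square above it); then it repeatedly picks a square whose projection touches the currently explored red cluster but whose colour is still undecided, reveals its $\varepsilon$, and if red attaches it to the cluster; it stops when no further extension is possible. Once the cluster of $S_k$ is fully explored, $\1_{A_n^{(M)}}$ is decided by checking whether the cluster contains $\mathbf 0$ and meets $S_n$, because any red path from $\mathbf 0$ to $S_n$ must cross $S_k$. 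The crucial property is that $T_k$ reveals $\varepsilon_i$ only when the square attached to $X_i$ is the lowest above a point probed at the boundary of the exploration; every such point lies within distance $R$ of the red cluster of $S_k$, so that $B(X_i,2R)$ must be red-connected to $S_k$. Therefore
\[
\delta_i(T_k)\leq \P_p\bigl(B(X_i,2R)\text{ is red-connected to }S_k\bigr),
\]
and by translation invariance together with monotonicity of $\theta_j$ in $j$, averaging over $k$ uniform in $\{0,\ldots,n-1\}$ yields $\bar\delta_i \leq \tfrac{C}{n}\sum_{k=0}^{n-1}\theta_k(p)$ uniformly in $i$, for a constant $C$ depending only on $R$.

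Applying OSSS in the continuum---obtained as a limit of discretisations where each coordinate corresponds to a small cell of $\R^2\times(0,\infty)$ encoding the local Poisson, shape and colour data, exactly as in Duminil-Copin--Raoufi--Tassion---to the randomised algorithm that first picks $k$ uniformly and runs $T_k$, we deduce
\[
\P_p(A_n^{(M)})\bigl(1-\P_p(A_n^{(M)})\bigr) \leq \bar\delta_{\max}\sum_i \text{Inf}_i(\1_{A_n^{(M)}}) \leq \frac{2Cp(1-p)}{n}\sum_{k=0}^{n-1}\theta_k(p)\,\tfrac{d}{dp}\P_p(A_n^{(M)}).
\]
Letting $M\to\infty$, with dominated convergence justified by Theorem \ref{dchi}, replaces the truncated quantities by $\theta_n(p)$ and $\theta_n'(p)$. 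Since $\theta_n$ is decreasing in $n$ and increasing in $p$, we have $\theta_n(p)\leq\theta_1(\zeta)=:M_\zeta<1$ for every $p\in(0,\zeta)$ and $n\geq 1$, so $\theta_n(1-\theta_n)\geq (1-M_\zeta)\theta_n$. Using the trivial bound $p(1-p)\leq 1/4$ and rearranging produces the claimed differential inequality with $u_\zeta = 2(1-M_\zeta)/C$.

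The main obstacle is the construction and revealment analysis of $T_k$ in the continuum confetti model: because the colour at a point of $\R^2$ depends on the vertical ordering of the floating squares above it, the algorithm must be orchestrated so that only the $\varepsilon_i$'s of squares actually relevant to the cluster boundary are consulted, and the revealment estimate must properly account for the fact that $T_k$ also probes colours of points just outside the cluster. Applying OSSS to a continuum model, in which the coordinates are not strictly Bernoulli, is usually handled by a discretisation and a limit argument, and this is where the bulk of the technical work will lie.
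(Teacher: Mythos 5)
Your proposal follows the same route as the paper: Russo's formula for the derivative, the OSSS inequality applied to a spatial discretisation of the continuum model, a family of exploration algorithms $T_k$ growing the red cluster of the shell $S_k$, a revealment bound in terms of one-arm probabilities, and an average over $k\in\{0,\dots,n-1\}$ followed by the normalisation $1-\theta_n(p)\geq 1-\theta_1(\zeta)$. This is exactly the paper's proof of Lemma \ref{lem2} via its Lemma \ref{lem3}, so in outline you have the right argument.

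Two steps as you have written them do not close, and both are precisely the points the paper spends its effort on. First, you cannot take the OSSS coordinates to be the Bernoulli colours $\varepsilon_i$ ``conditional on the Poisson data'': OSSS applied in that product space bounds only the conditional variance $\mathrm{Var}(\1_{A_n}\mid \boldsymbol\omega)$, whose expectation is \emph{smaller} than $\theta_n(p)(1-\theta_n(p))$, so the inequality points the wrong way for the quantity you need on the left. The coordinates must carry the Poisson, shape and colour data jointly; the paper does this by truncating at a height $h_\eta$ with $\P(M_n\le h_\eta)\ge 1-\eta$ and tiling $[-n-R,n+R]^2\times(0,h_\eta]$ into cylinders $C_i$ of base $\eta^{1/4}\times\eta^{1/4}$, at the price of $O(\eta\log^2\eta)$ error terms in both the influence bound (which then reads $\mathrm{Inf}^\eta_{C_i}\le \tfrac12\E_p\abs{\{j\in\mathrm{Piv}:X_j\in C_i\}}+O(\eta\log^2\eta)$) and the variance identity; these errors must be carried through and killed by a $\limsup_{\eta\to0}$ at the end. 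Your last paragraph gestures at this but the displayed chain of inequalities is stated without the error terms. Second, your revealment bound conflates two different events: revelation of coordinate $i$ forces the red cluster of $S_k$ to come within distance $O(R)$ of $c_i$, which is \emph{not} the event that $c_i$ itself is red-connected to $S_k$ (the latter is contained in the former, not the reverse). The comparison needs the FKG step of intersecting with the event that $B(c_i,3R)$ is entirely red, which yields $\delta^\eta_{C_i}(T_k)\le a\,\P\left(E(c_i,S_k,\eta)\right)$ with $a=\P_p\left(B(0,3R)\text{ red below height }h_\eta\right)^{-1}$; in particular the constant is not one ``depending only on $R$''. With these two repairs your argument becomes the paper's.
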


	Now, we prove Proposition~\ref{theo1} using this result.
\begin{proof}[Proof of Proposition~\ref{theo1} using Lemma~\ref{lem2}]
	First note that   $\theta_n(\lambda)$ is increasing in $\lambda$ and converges  pointwise to $\theta(\lambda)$ as $n \to \infty$.
	We now take $\zeta:=(\lambda_c(\rho,\beta)+1)/2$. From Corollary \ref{Nontrivial}, we know that $0 <\zeta < 1$.
	Using Lemma~\ref{lem2}, and taking $ f_k=\theta_k/u_{\zeta} $ and $M=1/u_{\zeta}$, we have from Lemma~\ref{copin},  there exists $q_{\zeta}\in[0,\zeta]$ such that and for all $n$ large enough, we have 
\begin{align}
& \theta_n(\lambda)\leq \exp\left(-c_{\lambda}n\right) , \text{ for all }\lambda\in(0,q_{\zeta}) \text{ and  some } c_{\lambda}\label{prooftheo1:eq1}\\
&\theta(\lambda)\geq {u_{\zeta}}(\lambda-q_{\zeta})\text{ for all }\lambda\in(q_{\zeta}, \zeta).\label{prooftheo1:eq2}
	\end{align} 
	Since $\lambda_c(\rho,\beta)<\zeta<1$,~\eqref{prooftheo1:eq1} and~\eqref{prooftheo1:eq2} imply that $q_{\zeta}$ has to  equal  $ \lambda_c(\rho,\beta) $.
	Therefore for  all $n$ large enough 
\begin{align*}	
&\theta_n(\lambda)\leq \exp\left(-c_{\lambda}n\right) \text{ for all }\lambda\in(0,\lambda_c(\rho,\beta)),\\
	&\theta(\lambda)\geq {u_{\zeta}}(\lambda-\lambda_c(\rho,\beta)), \text{ for all }\lambda\in(\lambda_c(\rho,\beta), \zeta).
\end{align*}
Since $\theta(\lambda)$  is non-decreasing in $\lambda$, by our choice of $\zeta$, we have 
$	\theta(\lambda)\geq \frac{u_{\zeta}}{2}(\lambda-\lambda_c(\rho,\beta))$ for all $\lambda\in(\lambda_c(\rho,\beta), 1)$,
	which concludes the proof of Proposition~\ref{theo1}.
%
%
%
%
\end{proof}
	
	Before we begin the proof of Lemma~\ref{lem2}, we do some housekeeping.
	Fix $n \geq 0$. We define a random variable $M_n$ as follows. For $\boldsymbol \omega$, the colouring of $[-n, n]^2$ is determined by $\Pc \cap [-n-R, n+R] \times (0, M_n(\boldsymbol{\omega})]$, i.e., the Poisson points  lying in the region $[-n-R, n+R] \times (0, M_n(\boldsymbol{\omega})]$ together with the associated $\boldsymbol \rho$,  $\boldsymbol \beta$,  $\boldsymbol \varepsilon$.

	Taking $m\in \N$  such that  $\P(\min\{\rho, \beta\} > 1/m)>\alpha$ for some $\alpha>0$ and dividing the region $[-n-R,n+R]^2$ into $16m^2(n+R)^2$ many distinct squares, each with a base size $\frac{1}{2m}\times \frac{1}{2m}$, we observe that if in each of these smaller squares there is a Poisson point  within   a height $h$ and for each such Poisson point $X_i$ (say), we have
	$\min\{\rho_i, \beta_i\} >  1/m$ then $M_n \leq h$. Thus
	\[
	\P(M_n \leq h)\geq 1-16m^2(n+R)^2 \exp \left(-\frac{\alpha h}{4m^2}\right). 
	\]
	For $\eta\in(0,1)$, let $h_{\eta}$ be the value of $h$ such that $16m^2(n+R)^2 \exp \left(-\frac{\alpha h}{4m^2}\right) = \eta$. 
	Thus $P(M_n \leq h_\eta)\geq 1- \eta$.
	
	Choosing  $\eta^{1/4}$ to be the reciprocal of an integer, we divide the region $[-n-R,n+R]^2\times (0,h_{\eta}]$ into distinct cylinders $C_1,C_2,\ldots C_k$, where $ k = 4\eta^{-1/2}(n+R)^2$, each with base size $\eta^{1/4}\times\eta^{1/4}$. 
	For $i\geq 1$,
	\begin{align*}
		& \P\left(\text{there exist two or more Poisson points in } {C}_i\right) \nonumber\\
		& = 1-\exp\left(-\eta^{1/2} h_{\eta}\right)-\exp\left(-\eta^{1/2} h_{\eta}\right)\cdot\eta^{1/2} h_{\eta} \nonumber\\
		& =O(\eta \log^2 \eta) \text{ as }\eta \to 0.
	\end{align*}
	
	For $n$ fixed as above, we define a local increasing event $A_n$ as   the event that {the origin is connected to $S_n$ via a red path and define $ B_n^{{\eta}} $ as the event that  the origin is connected to $S_n$ via a red path using Poisson points only in $\R^2\times(0,h_\eta]$.
		Let  $\theta_n^{\eta}(\lambda)=\P_\lambda(B_n^{\eta})$. 
		From our choice of $h_\eta$, as $ \eta \to 0$ we have $\theta_n^{\eta}(\lambda)= \theta_n(\lambda)+O(\eta)$; hence 
		\begin{equation}
			\label{r:thetaeta}
			\theta_n(\lambda)(1-\theta_n(\lambda)) = \theta_n^{\eta}(\lambda)(1-\theta^{\eta}_n(\lambda))+O(\eta).
		\end{equation}
		
		Noting that $\theta_n^{\eta}(\lambda)(1-\theta_n^{\eta}(\lambda)) = \text{Var}(1_{B_n^{{\eta}}})$, we want to apply the OSSS inequality for the function $f = 1_{B_n^{{\eta}}}$. We take $\Omega_i$ to be the restriction of the Poisson process $\Pc$ on the cylinder $C_i$ along with the associated $\boldsymbol {\rho}$, $\boldsymbol {\beta}$, $\boldsymbol {\varepsilon} $ for the  points of the Poisson process lying in the cylinder $C_i$. Also, $\text{Inf}_i$  and $\delta_i$ depends on $\eta$ and $C_i$ for all $i$ so we denote them by $\text{Inf}^{\eta}_{C_i}$ and $\delta^{\eta}_{C_i}$ respectively.

We need the following two technical results for the proof of Lemma~\ref{lem2}.
		\begin{lemma}
			\label{ddp>inf}
			\begin{equation*}
				\frac{d}{dp}\P_\lambda({A_n}) \geq \frac{1}{2}\cdot\limsup_{\eta\rightarrow 0} \sum_{i\geq 1} \text{Inf}_{{C}_i}^{\eta}(1_{B_n^{{\eta}}}).
			\end{equation*}
		\end{lemma}
		\begin{proof}
			Note that by construction,
			\[
			\text{Inf}_{{C}_i}^{\eta}(1_{B_n^{\eta}})= \P_\lambda \left(1_{B_n^{\eta}}(\boldsymbol{\omega},\boldsymbol{\varepsilon}) \neq 1_{B_n^{\eta}}(\boldsymbol{\omega_{[{C}_i]}},\boldsymbol{\varepsilon_{[{C}_i]}})\right),
			\]
			where $(\boldsymbol{\omega_{[V]}},\boldsymbol{\varepsilon_{[V]}})$ equals as $(\boldsymbol{\omega},\boldsymbol{\varepsilon})$ outside $V$ and is independently resampled in $V\subset\R^3$. Let $\boldsymbol{\varepsilon_{[V]}^{\text{red}}}$  equal $\boldsymbol{\varepsilon}$ outside $V$ and $\boldsymbol{1}$ inside $V$. Similarly let $\boldsymbol{\varepsilon_{[V]}^{\text{blue}}}$ equal $\boldsymbol{\varepsilon}$ outside $V$ and $\boldsymbol{0}$ inside $V$. Then we have
			\begin{align*}
				\text{Inf}_{{C}_i}^{\eta}(1_{B_n^{\eta}})
				&\leq 2\cdot\P_\lambda\left( 
				1_{B_n^{\eta}}\left(\boldsymbol{\omega},\boldsymbol{\varepsilon_{[{C}_i]}^{\text{blue}}}\right)
				\neq 1_{B_n^{\eta}}\left(\boldsymbol{\omega},\boldsymbol{\varepsilon_{[{C}_i]}^{\text{red}}}\right)
				\right)\\
				&\leq 2\cdot\P_\lambda\left( 
				1_{B_n^{\eta}}\left(\boldsymbol{\omega},\boldsymbol{\varepsilon_{[{C}_i]}^{\text{blue}}}\right)
				\neq 1_{B_n^{\eta}}\left(\boldsymbol{\omega},\boldsymbol{\varepsilon_{[{C}_i]}^{\text{red}}}\right),|\boldsymbol{\omega}\cap {C}_i|=1, M_n(\boldsymbol{\omega})\leq h_\eta 
				\right)\\
				&\hspace{10.25cm}+O(\eta \log^2 \eta)\\
				&\leq 2\cdot\P_\lambda\left( 
				1_{A_n}\left(\boldsymbol{\omega},\boldsymbol{\varepsilon_{[{C}_i]}^{\text{blue}}}\right)
				\neq 1_{A_n}\left(\boldsymbol{\omega},\boldsymbol{\varepsilon_{[{C}_i]}^{\text{red}}}\right),|\boldsymbol{\omega}\cap {C}_i|=1
				\right) +O(\eta \log^2 \eta)\\
				&\leq 2\cdot\E_\lambda\left[|\text{Piv}_{A_n}\cap {C}_i|\right]+O(\eta \log^2 \eta).
			\end{align*}
			Thus
			$$
			\limsup_{\eta\rightarrow 0} \sum_{i\geq 1} \text{Inf}_{{C}_i}^{\eta}(1_{B_n^{{\eta}}}) \leq 2\cdot\E_\lambda \left[|\text{Piv}_{A_n}|\right],
			$$
			and hence, by applying  Russo's formula, we get the required result.
		\end{proof}
		
		Let $P_i$ be the measure induced by $\P_\lambda$ when we restrict the Poisson point process $\Pc$ to $C_i$. For $c_i$  the centre of the projection $\pi({C}_i)$ of $C_i$ on its first two co-ordinates, let $E(c_i,S_k,\eta)$ be the event that   $c_i$ is connected to  $ S_k $  via a red path using Poisson points in $\R^2\times (0,h_{\eta}]$.  Then we have
		\begin{lemma}
			\label{lem3}
			There exists a constant $a>0$ such that for any $k\in\{1,2,\ldots,n\}$, there exists an algorithm $T_k$ determining $1_{{B_n^{{\eta}}}}$ with the property that for all $i\geq 1$, 
			\[
			\begin{split}
				\delta_{{C}_i}^{ \eta }(T_k)\leq a\bigotimes_{j\geq 1}P_j \left(E(c_i,S_k,\eta)\right).
			\end{split}
			\]
		\end{lemma}
		
		\begin{proof}
			Let 
			$$
			A^{(s)}=\{y\in\R^2:d(y,A)\leq s \}
			$$
			denote the fattening of  $A\subseteq \R^2$.  Here $d$ represents the $L_{\infty}$ distance. The algorithm $T_k$ is as follows
			\begin{itemize}
				\item[(a)] Take $\Ai_0=S_k$, $\Ci_0=\emptyset$.
				\item[(b)] For $t\geq 1$, $\Ci_t=\Ci_{t-1}\cup\left(\bigcup\left\{{C}_i:\pi({C}_i)\cap (\Ai_{t-1})^{(2R)}\neq \emptyset\right\}\right)$. Now reveal all the Poisson points in $\Ci_t$. Consider the colouring of $\R^2$ generated by the Poisson points in $\Ci_t$. Let $C(S_k, \Ci_t)$ be the union of $S_k$ and all red connected components of $(\Ai_{t-1})^{(R)}$ intersecting $S_k$ with respect to the colouring generated by Poisson points in $\Ci_t$. Take $\Ai_t= C(S_k, \Ci_t)\cap [-n,n]^2 $.
				\item[(c)] Stop the algorithm if any of the two conditions hold.
				\begin{enumerate}
					\item we get some $l$ such that ${\bf 0} \in \Ai_l$ and $\Ai_l\cap S_n\neq \emptyset$. In that case $1_{B_n^{\eta}}=1$.
					\item the previous condition does not hold, but we get some $l$ for which $\Ai_l=\Ai_{l+1}$. In that case $1_{B_n^{\eta}}=0$.
				\end{enumerate}
			\end{itemize}
			The revelation of ${C}_i$ implies the existence of  $l$ for which  $ d(\pi({C}_i),C(S_k,\Ci_l))\leq 2R $ and hence $ d(c_i,C(S_k,\Ci_l))\leq 2R+\frac{\eta^{1/4}}{2}<3R $. Now, if ${C}_i$ is revealed and $B(c_i,6R)$ is red then $c_i$ is connected to $S_k$ via a red path. So by the FKG inequality, we have 
			\begin{align*}
				&\bigotimes_{j\geq 1}P_j\left( E(c_i,S_k,\eta)\right)\\
				&\geq \delta_{{C}_i}^{ \eta }(T_k)\P_\lambda(B(0,6R)\text{ is red using only the Poisson points in $\R^2\times (0,h_{\eta}]$ }), 
			\end{align*}
			which implies $\delta_{{C}_i}^{\eta}(T_k)\leq a\bigotimes_{j\geq 1}P_j\left( E(c_i,S_k,\eta)\right)$
			for some constant $a$. This proves the lemma.
		\end{proof}

		Now we have all the machinery to prove Lemma~\ref{lem2}. 
		\begin{proof}[Proof of Lemma~\ref{lem2}]
			We first  note that
			the differentiability of $\theta_n(\lambda)$ was shown in Russo's formula.
			From the OSSS inequality, Lemma~\ref{lem3} and~\eqref{r:thetaeta}, we have
			\[
			\begin{split}
				\theta_n(\lambda)(1-\theta_n(\lambda))\leq   a\sum_{i\geq 1} \bigotimes_{j\geq 1}P_j\left(E(c_i,S_k,\eta)  \right)\mbox{Inf}_{{C}_i}^{ \eta }(1_{B_n^{{\eta}}}) +O(\eta),\\
			\end{split}
			\]
			which holds for all $k\in\{1,2,\ldots,n\}$, and so
			\[
			\begin{split}
				\theta_n(\lambda)(1-\theta_n(\lambda))\leq \frac{ a}{n}\sum_{i\geq 1}\sum_{k=1}^{n} \bigotimes_{j\geq 1}P_j\left(E(c_i,S_k,\eta)  \right)\mbox{Inf}_{{C}_i}^{\eta}(1_{B_n^{{\eta}}}) +O(\eta).
			\end{split}
			\]
			Since 
			\[
			\sum_{k=1}^{n} \bigotimes_{j\geq 1}P_j \left(E(c_i,S_k,\eta)  \right)\leq 2\sum_{k=0}^{n-1}\theta_k(\lambda),
			\]
			we have 
			\[
			\theta_n(\lambda)(1-\theta_n(\lambda))\leq\frac{2 a}{n}\left(\sum_{k=0}^{n-1}\theta_k(\lambda) \right)\left(\sum_{i\geq 1} \mbox{Inf}_{{C}_i}^{\eta}(1_{B_n^{\eta}}) \right) +O(\eta).
			\]
			Taking $\limsup$ as $\eta\rightarrow 0$ and using Lemma~\ref{ddp>inf}, we get
			\[
			\theta_n(\lambda)(1-\theta_n(\lambda))\leq\frac{4 a}{n}\left(\sum_{k=0}^{n-1}\theta_k(\lambda)\right)\frac{d}{d\lambda}\theta_n(\lambda).
			\]
			Now, $\theta_n(\lambda)$ is decreasing in $n$, so
			\[
			\theta_n(\lambda)\leq \frac{4 a}{n\left(1-\theta_1(\zeta) \right)}\left(\sum_{k=0}^{n-1}\theta_k(\lambda)\right)\frac{d}{d\lambda}\theta_n(\lambda),
			\]
			for all $\zeta>\lambda$ and hence Lemma~\ref{lem2} holds with $u_{\zeta}=\frac{1-\theta_1(\zeta)}{4a}$.
		\end{proof}

		
		Standard argument, see, for example, the proofs of Theorems~4.3 and~4.4 of Meester and Roy (1996), completes the proof of Theorem~\ref{th:dual}.
		
		\section{Covered Area Fraction}
		\label{Sec:caf}
		From Robbin's theorem (see Theorem~4.21 of Molchanov (2005)) we have
		\begin{align*}
			\text{CAF}_{\text{red}}(\lambda,\rho,\beta)
			&= \P_\lambda (\text {origin is red}).
		\end{align*}
		This characterisation gives us
		\begin{prop}
			\label{th:caf}
			For $\rho$ and $\beta$ two positive  random variables with finite second moments we have 
			\[
			\text{CAF}_{\text{red}}(\lambda,\rho,\beta) = \frac{\lambda\mathbb{E}[\rho^2]}{\lambda\mathbb{E}[\rho^2]+(1-\lambda)\mathbb{E}[\beta^2]}.
			\]
		\end{prop}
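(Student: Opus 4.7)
The plan is to reduce the covered area fraction to the probability that the origin is red and then compute this probability by a Poisson thinning argument.

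\textbf{Step 1: Reduction via ergodicity.} As already noted in the paragraph preceding the proposition, spatial ergodicity gives $\text{CAF}_{\text{red}}(\Ri,\Bi) = \P_p(\text{origin is red})$. So it suffices to evaluate the right-hand side.

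\textbf{Step 2: Split by colour.} Independently mark each Poisson point of $\Pc$ red or blue via $\boldsymbol\varepsilon$. By the standard colouring/thinning theorem, the red points form a Poisson process on $\R^2\times(0,\infty)$ of intensity $\lambda_r$ with i.i.d.\ marks $\rho_i$, and the blue points an \emph{independent} Poisson process of intensity $\lambda_b$ with i.i.d.\ marks $\beta_i$.

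\textbf{Step 3: Compute the rate at which the origin is covered by each colour.} Given a red point at $(x,y,h)$ with side $\rho$, the origin lies in its projected square iff $(x,y)\in[-\rho/2,\rho/2]^2$, an event of area $\rho^2$ in the $(x,y)$-plane. By the mapping theorem applied to the marked Poisson process, the set of heights $h$ at which some red square covers the origin is itself a Poisson point process on $(0,\infty)$; its intensity is
\[
\lambda_r \int_{\R^2}\P\bigl((x,y)\in[-\rho/2,\rho/2]^2\bigr)\,dx\,dy = \lambda_r\, \E(\rho^2).
\]
By the same argument, the blue-covering heights form an independent Poisson process on $(0,\infty)$ of intensity $\lambda_b\,\E(\beta^2)$; the independence follows from the independence of the two coloured point processes.

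\textbf{Step 4: Race between two Poisson processes.} The origin is red iff $h(\mathbf{0})$ corresponds to a red index, i.e.\ iff the minimum of the red-covering heights is strictly smaller than the minimum of the blue-covering heights. For two independent Poisson processes on $(0,\infty)$ of rates $\mu_1=\lambda_r\E(\rho^2)$ and $\mu_2=\lambda_b\E(\beta^2)$ (both finite and positive by the second-moment assumption and Assumption \ref{r:ass}), a standard computation gives
\[
\P(\text{red minimum occurs first}) = \frac{\mu_1}{\mu_1+\mu_2} = \frac{\lambda_r\E(\rho^2)}{\lambda_r\E(\rho^2) + \lambda_b\E(\beta^2)},
\]
which yields the claimed formula.

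The only step requiring a little care is Step 3, where one should invoke the marking/mapping theorem on the marked Poisson process $\{(X_i,\rho_i)\}$ restricted to red indices, rather than try to manipulate the non-Poissonian process of ``squares covering the origin'' directly; once that is set up, the race computation in Step 4 is routine.
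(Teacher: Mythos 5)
Your proposal is correct, and it takes a genuinely different route from the paper. The paper first works with a discretized $n$-coloured model in which every colour has a \emph{fixed} square size: it orders the Poisson points of a reference cylinder by height, sums a geometric series over which ordered point is the first to cover the origin, then merges colours into red/blue and finally passes to general random $\rho,\beta$ by a truncation and dominated convergence argument. You instead invoke the thinning, marking and mapping theorems to reduce directly to two independent homogeneous Poisson processes on $(0,\infty)$ of "covering heights" with rates $\lambda_r\E(\rho^2)$ and $\lambda_b\E(\beta^2)$, and finish with the standard exponential race $\P(\mathrm{Exp}(\mu_1)<\mathrm{Exp}(\mu_2))=\mu_1/(\mu_1+\mu_2)$. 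Your argument handles arbitrary distributions with finite second moment in one stroke, with no discretization or limiting step, and makes transparent exactly where the second-moment hypothesis enters (finiteness of the projected intensity); the paper's computation is more elementary and explicit but needs the extra approximation layer. Both proofs share the initial ergodicity reduction to $\P_p(\text{origin is red})$, and your observation that ties in the covering heights occur with probability zero (so "origin is red" is exactly "red minimum comes first") is the only point of the race step that needs stating, which you do.
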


		\begin{proof}
			Suppose instead of taking a two coloured confetti model we consider an $n$-coloured model, i.e., we have  $n$ independent Poisson point processes $\{\Pc_i\}_{i=1}^n$ on $ \R^2 \times (0,\infty)$ with $n$ distinct colours.  For $i\in \{1,2,\ldots,n\}$ let $\lambda_i \geq 0$ with $\sum_{i=1}^n \lambda_i = 1$.  Assume that the Poisson point process $\Pc_i$ has intensity $\lambda_i$ and the associated squares have sides of fixed  length $r_i$. Without loss of generality we assume $r_1\leq r_2\leq\cdots\leq r_n$. Let $C_j := [-r_j, r_j]^2 \times (0, \infty)$. Let $l_1, l_2, \ldots$ be the ordering of the Poisson points of  $\cup_{i=1}^n \Pc_i$ lying in the cylinder $C_n$, where the ordering is done according to the value of the third co-ordinate. Thus  $l_k$ is the $k$th lowest Poisson point in the cylinder $C_n$. 
			
			For any $k \geq 1$, in order for the origin to be coloured due to $l_k$ and for it to have the $i^{\text{th}}$ colour, we must have 	
			\begin{itemize}
				\item[(i)] $l_k \in \Pc_i$ and $l_k\in C_r$;  an event with probability $\frac{\lambda_i r^2_i}{r^2_n}$,
				\item[(ii)] for all $1\leq m \leq k-1$, the point $l_m$ has the $j$th colour and lies in $C_n \setminus C_j$, for some $j \in \{i, \ldots , n\}$;  an event with probability $\left(\sum_{j=1}^{n}\frac{\lambda_j(r^2_n-r^2_j)}{r^2_n}\right)^{k-1}$ because of the independence of the Poisson processes.
			\end{itemize}
			The above two events being independent, we have that the covered area fraction, $\text{CAF}_i$, of the $i^{\text{th}}$ colour is given by
			$$
			\sum_{k=1}^{\infty} \mathbb{P}(\text{origin  has colour $i$ due to $l_k$}) =  \frac{\lambda_i r^2_i}{r^2_n}\sum_{k=1}^{\infty} \left(\sum_{j=1}^{n}\frac{\lambda_j(r^2_n-r^2_j)}{r^2_n}\right)^{k-1} 
			= \frac{\lambda_i r_i^2}{\sum_{j=1}^{n}\lambda_jr_j^2 }.
			$$

			Now, suppose we convert all the colours of $I\subseteq \{1,2,\ldots,n\}$ to red and the remaining colours to blue. Let $\Ri=\cup_{i\in I} \Pc_i$, 
			$\Bi=\cup_{i\notin I} \Pc_i$ and  $\lambda =\sum_{i\in I}\lambda_i$. For $u\in I$,  the random variable $\rho$ takes value $r_u$ with probability $\lambda_u/\lambda$, and for $v\notin I$,  the random variable $\beta$ takes value $r_v$ with probability $\lambda_v/(1- \lambda)$. Therefore 
			$$
			\text{CAF}_{\text{red}} (\lambda,\rho,\beta) = \sum_{i\in I}\text{CAF}_i
			= \frac{\sum_{i\in I}\lambda_ir_i^2}{\sum_{i\in I}\lambda_ir_i^2+\sum_{i\notin I}\lambda_ir_i^2 } 
			= \frac{\lambda\mathbb{E}[\rho^2]}{\lambda\mathbb{E}[\rho^2]+(1-\lambda)\mathbb{E}[\beta^2]}.
			$$
			
			A truncation argument, together with the dominated convergence theorem, completes the proof for general $\rho$ and $\beta$ with finite second moments.
		\end{proof}		
		For the proof of Theorem~\ref{th:cCAF1/2},
		we use the equivalent representation of the confetti model introduced prior to the statement of the transitivity condition.
		
	\begin{proof}[Proof of Theorem~\ref{th:cCAF1/2}]
		 \textit{(ii) $\Leftrightarrow$ (iii).}
			Recall, for this theorem, we assume $\rho$ and $\beta$ are fixed constants.
			From Proposition~\ref{th:caf}, 	
			\[
			\text{cCAF}_{\text{red}}(\rho,\beta)= \frac{\lambda_c(\rho,\beta)\rho^2}{\lambda_c(\rho,\beta)\rho^2 +(1-\lambda_c(\rho,\beta))\beta^2} = \frac{1}{2}
			\]
			if and only if $ \lambda_c(\rho,\beta)\rho^2=(1-\lambda_c(\rho,\beta))\beta^2$, i.e., 
			$\lambda_c(\rho,\beta) =\frac{\beta^2}{\rho^2+\beta^2}$.
			This proves the equivalence of (ii) and (iii).
			
		\textit{(ii) $\Rightarrow$ (i).} 
			Suppose (ii) holds and that red is supercritical in the confetti model $(\Ri, \lambda_r, \rho; \Bi, \lambda_b, \beta)$ and blue is supercritical in the confetti model $(\Bi, \lambda_b, \beta; \Gi, \lambda_g, \gamma )$. From Proposition~\ref{th:caf}, we have $\lambda_r \rho^2>\lambda_b\beta^2>\lambda_g\gamma^2$. Therefore $\text{CAF}_{\text{red}}(\frac{\lambda_r}{\lambda_r+\lambda_g},\rho,\gamma) > 1/2$, and hence red is supercritical in $(\Ri, \lambda_r, \rho;\Gi, \lambda_g, \gamma )$. Thus the transitivity condition holds.
			
		\textit{(i) $\Rightarrow$ (ii).}
			Finally, suppose the transitivity condition holds, and we have a confetti model $(\Ri, \lambda_r, \rho; \Bi, \lambda_b, \beta)$ with $\text{CAF}_{\text{red}}(\lambda,\rho,\beta) < 1/2$, i.e.,  $\lambda_r \rho^2<\lambda_b\beta^2$, but red is supercritical. Observe that by  the scaling $(x_1, x_2, x_3) \mapsto (x_1/\rho, x_2/\rho, x_3)$, the confetti model $(\Ri, \lambda_r, \rho; \Bi, \lambda_b, \beta)$ is equivalent to the model $(\Ri, \lambda_r\rho^2,1;\Bi,\lambda_b\rho^2,\frac{\beta}{\rho})$ in the sense that all percolation properties will be preserved. A further scaling $(x_1, x_2, x_3) \mapsto (x_1, x_2, \lambda_r \rho^2 x_3)$ shows that the above confetti model is equivalent to the model $(\Ri,1,1;\Bi,\mu,\nu)$, where $\mu=\lambda_b/\lambda_r$ and $\nu=\beta/\rho$. Note that $\sigma=\mu\nu^2>1$. By a similar scaling argument, we see that the model $(\Ri,1,1;\Bi,\mu,\nu)$ is equivalent to the models $(\Ri,\mu^{k-1},\nu^{k-1};\Bi,\mu^k,\nu^k)$ for any $k\in \N$. Thus red is supercritical in  $(\Ri, \lambda_r, \rho; \Bi, \lambda_b, \beta)$ implies that, for any $k\in\N$, red is supercritical in the confetti model $(\Ri,\mu^{k-1},\nu^{k-1};\Bi,\mu^k,\nu^k)$. Therefore, by the transitivity condition in (i), red is supercritical in the confetti model $(\Ri,1,1;\Bi,\mu^n,\nu^n)$ for any $n\in \N$.
			
			 It is clear that red cannot be  supercritical in the model $(\Ri,1,1;\Bi,\mu,\nu)$ when $\nu=1$ and $\mu>1$.  So we are left with two cases.
			
			\vspace{.3cm}
			
			\noindent \textit{Case I. $\nu>1$}:  Consider the model $(\Ri,1,1;\Bi,\mu^n,\nu^n)$. We divide $[0,\nu^n]\times[0,3\nu^n]$ into $12$ equal disjoint  squares, each with sides of length $\nu^{n}/2$. The probability that there exists a blue Poisson point within a height $h$ of each of these 12 small squares  is  
			$$
			\left( 1-e^{-\mu^n\nu^{2n}\frac{h}{4}}  \right)^{12} = \left( 1-e^{-\sigma^n\frac{h}{4}}  \right)^{12}.
			$$

			Now, consider the projection $\pi(\Ri\cap(\R^2\times[0,h_0]))$ of the red points in the region $\R^2\times[0,h_0]$ on the plane $\R^2 \times \{0\}$; here $\pi:\R^3\rightarrow\R^2$ is a projection onto first two co-ordinates. The points $\pi(\Ri\cap(\R^2\times[0,h_0]))$ form a Poisson point process of intensity $h_0$ on the plane. Associated with each of the points in the projection, we place a square with sides of unit length and  parallel to the axis; thereby we have a continuum Boolean percolation model  (Meester and Roy (1996)). We denote this model by $\left(\pi(\Ri\cap(\R^2\times[0,h_0])), h_0, 1\right)$.
			
			Choosing $h_0$ small enough, we may ensure that the continuum Boolean percolation model on the plane as obtained  above is in its subcritical phase. So there exists $N_0$ large enough,  such that for all $n\geq N_0$, the   probability that $[0,n]\times[0,3n]$ admits a red horizontal crossing in the continuum model is at most $\kappa_0/3 > 0$, where $\kappa_0$ is as in Proposition~\ref{th:exp}.	
			Choose $n_0$ large enough so that $\nu^{n_0}>N_0$ and also 
			\begin{equation}
				\label{r:h_oprob}
				\left( 1-e^{-\sigma^{n_0}\frac{h_0}{4}}  \right)^{12}\geq 1-\frac{\kappa_0}{3}.
			\end{equation}
			Now, the existence of blue Poisson points below a height $h_0$ and above each of the 12 squares, along with the non-existence of a red horizontal crossing in the projected continuum model guarantees the non-existence of  a red horizontal crossing of $[0,n_0]\times[0,3n_0]$ in the confetti setting. From~\eqref{r:h_oprob}, the probability that this occurs is larger than $1 - (\frac{\kappa_0}{3}+\frac{\kappa_0}{3})$; and hence from Proposition~\ref{th:exp}, red is not supercritical in the confetti model $(\Ri,1,1;\Bi,\mu^n,\nu^n)$ which contradicts our assumption.
			
			\vspace{.3cm}
			
			\noindent \textit{Case II. $\nu<1$}: 
			Again, consider the model $(\Ri,1,1;\Bi,\mu^n,\nu^n)$.
			Choose $h_0 > 0$ small enough such that 
			\begin{align}
				\label{r:vert}
				\P\left(\text{there exists a red Poisson point in } [0,1]\times[0,3]\times[0,h_0]\right) = 1-e^{-3h_0} < \frac{\kappa_0}{3},
			\end{align}
			where $\kappa_0 > 0$ is as above. Now, let
			\begin{align*}
				p_n:=\, & \P\left( \text{there exists a  vertical blue crossing of } [0,1]\times[0,3] \text{ in the blue }\right.\\
				& \left. \qquad \text{continuum Boolean model  }(\pi(\Bi\cap(\R^2\times[0,h_0])),h_0\mu^n,\nu^n)\right)
			\end{align*}
			
			The event $\{$there exists a  vertical blue crossing of $[0,1]\times[0,3] $ in the blue 
			continuum Boolean model  $(\pi(\Bi\cap(\R^2\times[0,h_0])),h_0\mu^n,\nu^n)\}$,
			together with the event $\{$there does not exist any red Poisson point in $[0,1]\times[0,3]\times[0,h_0]\}$, implies that there does not exist any  horizontal red crossing of $[0,1]\times[0,3]$ in the confetti  model.
			
			Assume, for the moment, 
			\begin{align}
				\label{r:p_n}
				p_{n_0} > 1- \frac{\kappa_0}{3} \text{ for some } n_0\geq 1,
			\end{align}
			then along with~\eqref{r:vert}, we have
			$$
			\P \left(\text{there exists a  horizontal red crossing of } [0,1]\times[0,3] \text{ in the confetti  model}\right) <
			\kappa_0
			$$
			which, along with Proposition~\ref{th:exp}, yields a contradiction.

			To obtain~\eqref{r:p_n}, we cover the rectangle $[0,1]\times[0,3]$ by $\lceil\frac{6}{\nu^n}\rceil\times\lceil\frac{2}{\nu^n}\rceil$ many small squares each with sides of length  $\nu^n/2 $. Observe that if within a height $h_0$ of each of these squares there is a blue Poisson point, then 
			the  continuum model $(\pi(\Bi\cap(\R^2\times[0,h_0])),h_0\mu^n,\nu^n)$ admits a 
			vertical blue crossing of $ [0,1]\times[0,3]$. 
			Thus we have
			$$
			p_n >   \left( 1-e^{-\sigma^n\frac{h}{4}}  \right)^{\frac{21}{\nu^{2n}}} .
			$$
			Now,
			\begin{align*}
				\lim\limits_{n\rightarrow\infty} \log\left( 1-e^{-\frac{\sigma^nh}{4}}  \right)^{\frac{21}{\nu^{2n}}}
				&\geq \lim\limits_{n\rightarrow\infty} \frac{\log\left( 1-e^{-\frac{\sigma^nh}{4}}  \right)}{e^{-\frac{\sigma^nh}{4}}}\cdot e^{-\frac{\sigma^nh}{4}}\left(\frac{21}{\nu^{2n}}\right)\\
				&= \lim\limits_{n\rightarrow\infty} \frac{21\cdot\log\left( 1-e^{-\frac{\sigma^nh}{4}}  \right)}{e^{-\frac{\sigma^nh}{4}}}\exp\left(-\frac{\sigma^nh}{4}-2n\log \nu\right) = 0.
			\end{align*}
			This validates~\eqref{r:p_n}.

			Therefore,   whenever $\lambda_r\rho^2<\lambda_b\beta^2$ red is not supercritical and similarly whenever $\lambda_r\rho^2>\lambda_b\beta^2$ blue is not supercritical. Hence we have $\lambda_r\rho^2=\lambda_b\beta^2$ at criticality, i.e., the critical covered area fraction equals $1/2$.
			
			This completes the proof.
		\end{proof}
		
		Before we conclude this section, we present a sufficient condition for the transitivity condition to hold.
		\begin{prop}
			\label{r:suff}
			Consider the confetti model $(\Ri, 1,1; \Bi,\frac{\sigma}{\beta^2}, \beta)$ and let $S_n := \{z\in \mathbb{R}^2: \|z\|_{\infty}=n\}$. For fixed $\sigma > 1$, 
			the transitivity condition holds whenever 	$\P(C^{\text{red}}({\bf 0}) \cap S_n \neq \emptyset)$ is 
			\begin{itemize}
			\item[(a)] monotonic (either non-increasing or non-decreasing)  for $\beta\in (0,1)$ and 
			\item[(b)] monotonic (either non-increasing or non-decreasing)  for $\beta\in (1,\infty)$ for each $n\geq 1$.
			\end{itemize}
		\end{prop}
		
		\begin{remark}
			\label{rem:suff}
			Note that in view of Proposition~\ref{r:suff}, it is enough to show that $\P(C^{\text{red}}({\bf 0}) \cap S_n \neq \emptyset)$ is monotonic (either non-inccreasing or non-decreasing)  in $\beta$ on $(0,1)$ and
			also monotonic (either non-inccreasing or non-decreasing) in $(1,\infty)$ for each $n\geq 1$. Let $\boldsymbol{x}$ be the realisation of the Poisson point process $\Pc$. We know that for any fix $\boldsymbol x$ and $\alpha\in(\beta/2,2\beta)$, the occurrence of the event 
			$$\{ C^{\text{red}}({\bf 0}) \cap S_n \neq \emptyset \mbox{ in the model $(\Pc, \frac{\alpha^2}{\sigma+\alpha^2},1,  \alpha)$ } \}$$
			depends only on the 
			Poisson points with indices $1,2, \ldots,k(\boldsymbol{x},\beta)  $ for some $k(\boldsymbol{x},\beta)\in\N$ depending only on $\boldsymbol{x}$ and $\beta$.
			Let $\boldsymbol{\beta}=(\beta_i)_{i\in\N} $ with $\beta_i=\beta$ for $i>k(\boldsymbol{x},\beta)$. Then 
			\begin{align*}
				\frac{d}{d\beta}\P_{\boldsymbol{\beta}}(C^{\text{red}}({\bf 0}) \cap S_n \neq \emptyset)
				=\sum_{i=1}^{k(\boldsymbol{x},\beta)} \frac{\partial}{\partial \beta_i} \P_{\boldsymbol{\beta}}(C^{\text{red}}({\bf 0}) \cap S_n \neq \emptyset)
			\end{align*}
			Here, $\P_{\boldsymbol{\beta}}$ indicates that the probability of the $i$-th Poisson point being blue (respectively, red)  is $\frac{\sigma}{\sigma+\beta_i^2}$ (respectively,  $\frac{\beta_i^2}{\sigma+\beta_i^2}$) and if it is blue (respectively, red) the associated blue (respectively, red) square will have side length $\beta_i$ (respectively, $1$). We fix $\delta \in(0,\beta/2)$ and write $\boldsymbol{\beta}_i=(\beta_1,\beta_2,\ldots, \beta_{i-1},\beta_i+\delta,\beta_{i+1},\ldots,\beta_{k(\boldsymbol{x},\beta)},\beta,\beta,\ldots)$.
			We couple the models associated  with $\boldsymbol{\beta}$ and $\boldsymbol{\beta}_i$  in such a way  that the corresponding realisations of the Poisson point processes match and if $\boldsymbol{\varepsilon}$ and $\boldsymbol{\varepsilon'}$ are the corresponding Bernoulli sequences, then   $\varepsilon_i\leq\varepsilon_i'$ and $\varepsilon_j=\varepsilon_j'$ for all $j\neq i$. 
			Let $A$ be the event $\left\{ C^{\text{red}}({\bf 0}) \cap S_n \neq \emptyset \right\}$. Then for any $i\leq k(\boldsymbol{x},\beta)  $,
			\begin{align*}
				&	\P_{\boldsymbol{\beta}_i}(A)-\P_{\boldsymbol{\beta}}(A)\\[.25cm]
				=\,\,&	\P_{\boldsymbol{\beta}_i}(A \text{ occurs}, \varepsilon_i=\varepsilon_i'=1 )-\P_{\boldsymbol{\beta}}(A \text{ occurs}, \varepsilon_i=\varepsilon_i'=1)\\
				&+	\P_{\boldsymbol{\beta}_i}(A \text{ occurs}, \varepsilon_i=0,\varepsilon_i'=1 )-\P_{\boldsymbol{\beta}}(A \text{ occurs}, \varepsilon_i=0,\varepsilon_i'=1)\\
				&+	\P_{\boldsymbol{\beta}_i}(A \text{ occurs}, \varepsilon_i=\varepsilon_i'=0 )-\P_{\boldsymbol{\beta}}(A \text{ occurs}, \varepsilon_i=\varepsilon_i'=0)\\[.25cm]
				=\,\,& 0+\left(\frac{\sigma}{\sigma+\beta_i^2}-\frac{\sigma}{\sigma+(\beta_i+\delta)^2}\right)\cdot \P_{\boldsymbol{\beta}}(\text{$i$-th Poisson point is `colour pivotal' for }A )\\
				\\&-\frac{\sigma}{\sigma+(\beta_i+\delta)^2}\cdot\P_{\boldsymbol{\beta}}(\text{$i$-th Poisson point is `size pivotal' for }A ).
			\end{align*}
			Here, the $i$-th Poisson point is `colour pivotal' for $A$ indicates if we change its colour from red to blue, it will affect the occurrence of $A$. Similarly, the $i$-th Poisson point is `size pivotal' for $A$ means if the point is blue and we change the side length of the associated blue square  from $\beta_i$ to $\beta_i+\delta$ , it will affect the occurrence of $A$.
			
			So, we need to compare these two types of pivotality to calculate the sign of $\frac{d}{d \beta} \P_{\boldsymbol{\beta}}(A)$. And if we can show that $\frac{d}{d \beta} \P_{\boldsymbol{\beta}}(A)$ does not change sign in $(0,1)$ and also in $(1,\infty)$, then that, together with Proposition~\ref{r:suff},  will imply the `transitivity' condition.

		\end{remark}

		\begin{proof}[Proof of Proposition~\ref{r:suff}] We first recall some properties of the continuum Boolean model of percolation (see Meester and Roy (1996)). Let $(\Ci, \lambda, 1)$ be a $2$-dimensional continuum Boolean model with squares of sides of length $1$ and let $\lambda_c^B(1)$ be its critical intensity.  The critical CVF for $(\Ci, \lambda,1)$ is  $1 - \exp(-\lambda_c^B(1))$, and for percolation of fixed sized squares the model is subcritical or supercritical according as 
			the CVF is smaller or larger than $1 - \exp(-\lambda_c^B(1))$.

			Consider the confetti percolation model $(\Ri,1,1;\Bi,\frac{\sigma}{\beta^2},\beta)$ for fixed $\sigma>1$. Let $h\in(\frac{\lambda_c^B(1)}{\sigma},\lambda_c^B(1))$ and consider the slab  $\mathbf{S_1}=\R^2\times (0,h)$. 
			
			The  projection of only the red squares in the slab $\mathbf{S_1}$ of $(\Ri,1,1;\Bi,\frac{\sigma}{\beta^2},\beta)$  onto the plane $\R^2\times \{0\}$  yields a continuum Boolean model  $(\Ci_R, h, 1)$ of intensity $h$ and hence is subcritical by our choice of $h$.  Let $W_R$ ($V_R$) denote the occupied (vacant) region of $(\Ci_R, h,1)$. 
			Now Theorem 3.5 of Meester and Roy (1996) guarantees the existence of  $N>1$ such that  $(\Ci_R, h, 1)$ admits a horizontal occupied crossing of the rectangle $R_N$ has a probability at most ${\kappa_0}/{4} $, where $\kappa_0$ is as in Lemma~3.3 of Meester and Roy (1996).  Thus
			\begin{equation}
				\label{r:recross}
				P(E_{n_0}) \geq 1- \kappa_0 /2 \text{ for some } n_0 \geq 1
			\end{equation}
			where, for $n \geq 1$, $E_n$ is the event that in the model $(\Ci_R, h,1)$,  
			\begin{itemize}
				\item[(i)] there is a vacant vertical crossing of the rectangle $R_N :=[0,N]\times[0,3N]$,
				\item[(ii)] distinct connected occupied components of $W_R$ in $R_N$ are separated by a distance at least ${1}/{n}$, and
				\item[(iii)] if a connected component of $W_R$ does not intersect the left or right boundary of $R_N$, then it is at a distance at least ${1}/{n}$ from the boundary.
			\end{itemize}
			
			Again, the projection of only the blue squares in the slab $\mathbf{S_1}$ of $(\Ri,1,1;\Bi,\frac{\sigma}{\beta^2},\beta)$  onto the plane $\R^2\times \{0\}$  yields a continuum Boolean model  $(\Ci_B, \frac{\sigma h}{\beta^2},\beta)
			$ which is independent  of $(\Ci_R, h, 1)$. Moreover, $(\Ci_B, \frac{\sigma h}{\beta^2},\beta)
			$  is supercritical by our choice of $h$. Let
			$W_B^{(\beta)}$ ($V_B^{(\beta)}$) denote the occupied (vacant) region of $(\Ci_B, \frac{\sigma h}{\beta^2},\beta)
			$.
			From Theorems~4.3 and~4.4 together with Lemma~4.1 of Meester and Roy (1996),  we have for the model $(\Ci_B, \frac{\sigma h}{\beta^2},\beta)
			$, there exist constants $C_1, C_2 > 0$, independent of $a$, such that 
			$$
			\P\left(\text{diam}(V_B^{(\beta)}([0,\beta]^2) \geq a \beta \right) \leq C_1 \exp(-C_2 a).
			$$
			The above equation may be rewritten as
			\begin{equation}
				\label{r:scale}
				\P\left(\text{diam}(V_B^{(\beta)}([0,\beta]^2) \geq b \right) \leq C_1 \exp(-C_2 b/\beta) \text{ for any } b > 0.	
			\end{equation}

			Now, suppose that $E_{n_0}$ occurs and that 	$W_R\cup V_B^{(\beta)}$ admits a horizontal crossing of the rectangle $R_N$. It must then be the case that there exists a square $S$ (say) of size $\beta\times\beta$ for which the diameter of  $V_B^{(\beta)}(S)$ is at least ${1}/{n_0}$. Also, $R_N$ can be tiled with atmost $\big\lceil \frac{3N^2}{\beta^2}\big \rceil$ squares of size $\beta\times\beta$. Thus, from~\eqref{r:recross} and~\eqref{r:scale}, we have that there exists $\beta_1 > 0$ such that,
			\begin{align} \label{referee:beta1}
				&\P(W_R\cup V_B^{(\beta)} \text{ admits a horizontal crossing of the rectangle } R_N) \nonumber\\
				&\leq \frac{\kappa_0}{2}+ \left\lceil \frac{3N^2}{\beta^2}\right \rceil\cdot c_1\exp\left(-c_2\cdot\frac{1}{n_0\beta} \right)\nonumber\\
				& < \frac{3\kappa_0}{4} \text{ for all } 0 < \beta\leq\beta_1.
			\end{align}
			Hence, for all $0 < \beta\leq\beta_1$, from Proposition~\ref{th:exp}, we see that, because 
			$$
			\P( R_N \text{ admits a red horizontal crossing}) < 3\kappa_0/4,
			$$
			there is no red percolation in  $(\Ri,1,1;\Bi,\frac{\sigma}{\beta^2},\beta)$.
			

			Similarly for the confetti model $(\Ri,1,1;\Bi,\frac{1}{\sigma\beta^2},\beta)$ taking
			$h'\in(\lambda_c^B(1),\sigma\lambda_c^B(1))$ and the slab  $\mathbf{S_2}:=\R^2\times (0,h')$. The projection of the red squares in this slab on $\R^2 \times \{0\}$ yields a supercritical continuum Boolean model 
			$(\Ci_R, h', 1)$ and the projection of the blue squares in this slab on $\R^2 \times \{0\}$ yields  a subcritical 
			continuum Boolean model $(\Ci_B, \frac{h'}{\sigma\beta^2},\beta)$. Note here we have used the scale equivalence of the models  $(\Ci_B, \frac{h'}{\sigma\beta^2},\beta)$ and $(\Ci_B, {h'}/{\sigma},1)$.

			Calculations similar to the previous case gives us that there exist $M > 0$ and  $\beta_0 > 0$ such that,
			\begin{align}\label{referee:beta0}
				&\P(W_B^{\beta}\cup V_R \text{ admits a horizontal crossing of the rectangle } R_M) \nonumber\\
				&\leq \frac{\kappa_0}{2}+ \left\lceil \frac{3M^2}{\beta^2}\right \rceil\cdot c_1\exp\left(-c_2\cdot\frac{1}{n_0\beta} \right)\nonumber\\
				& < \frac{3\kappa_0}{4} \text{ for all } 0 < \beta\leq\beta_0 
			\end{align}
			and, for all $0 < \beta\leq\beta_0$, in the confetti percolation model $(\Ri,1,1;\Bi,\frac{1}{\sigma\beta^2},\beta)$,
			$$
			\P( R_M \text{ admits a blue horizontal crossing}) < 3\kappa_0/4.
			$$
			
			So interchanging the parameters, we obtain that for all $0 < \beta\leq\beta_o$, in the confetti percolation model $(\Ri,\frac{1}{\sigma\beta^2},\beta;\Bi, 1,1)$,
			$$
			\P( R_M \text{ admits a red horizontal crossing}) < 3\kappa_0/4.
			$$
			Also scaling shows that the model $(\Ri,\frac{1}{\sigma\beta^2},\beta;\Bi, 1,1)$ is equivalent to the model $(\Ri,1,1;\Bi,\sigma\beta^2,\frac{1}{\beta})$. Thus Proposition~\ref{th:exp} yields that there is no red percolation in  $(\Ri,1,1;\Bi,\frac{\sigma}{\gamma^2},\gamma)$ for $\gamma > 1/\beta_0$.
			
			Thus, from (\ref{referee:beta1}) and (\ref{referee:beta0}), we  have $\beta_1 \leq1$  and $\beta_2:=1/\beta_0 \geq 1$ such that, for all $\beta\notin(\beta_1,\beta_2)$ red does not percolate. The monotonicity hypothesis in the statement of the proposition guarantees that, for the confetti percolation model $(\Ri,1,1;\Bi,\frac{\sigma}{\beta^2},\beta)$, with $\sigma > 1$,
			$$
			\P(\text{diam} (C^{\text{red}}({\bf 0})) = \infty) =0 \text{ for all } \beta> 0.
			$$
			Now any confetti model with covered area fraction of red strictly less than $1/2$ is scale equivalent to a model $(\Ri,1,1;\Bi,\frac{\sigma}{\beta^2},\beta)$. Thus the covered area fraction of red strictly less than $1/2$ implies  red does not percolate. This completes the proof of the proposition.
		\end{proof}

		\section{Proof of Theorem~\ref{th:cCAFrandom}}
		\label{Sec:caf_random}
		
		To prove Theorem~\ref{th:cCAFrandom}, we use the following technical result. 
		Here we use the equivalent representation of the confetti model that was introduced before the statement of the transitivity condition. $\lambda_c^B(1)$ denotes the critical intensity for percolation of the continuum Boolean model with squares of sides of length $1$. 
		\begin{lemma}
			\label{lem:caf_random}
			Suppose we have a confetti percolation model $(\Ri,\lambda_1,\alpha_1;\Bi, \lambda_2,\alpha_2)$ with $\alpha_1,\alpha_2\leq R$. Let $N>R$ and $\delta\in(0,\kappa_0)$, where $N$ and $\kappa_0$ are as in Proposition~\ref{th:exp}. Let $G$ be the event that there exists a point in $R_N := [0,N] \times  [0,3N]$ with  no blue square within  a height $1$ above it and $H_N$  be the event that  $R_N$ admits a  red horizontal crossing. Suppose the following two conditions hold:		
			\begin{enumerate}
				\item $\mathbb{P}_{\lambda_1,\alpha_1, \lambda_2,\alpha_2} (G)<\delta $
				\item $ \mathbb{P}_{\lambda_1,\alpha_1, \lambda_2,\alpha_2}\left(H_N\cap G^{\complement}\right)<\kappa_0-\delta $
			\end{enumerate}		
			then there exist $\lambda_3$ and $ \alpha_3 $ with $ \alpha_3\leq R $ such that 		
			\[
			\lambda_3\mathbb{E}\left[\alpha_3^2\right]=\lambda_1\mathbb{E}\left[\alpha_1^2\right]+\frac{\lambda_c^B(1)}{2} 
			\]		
			and the above two conditions hold for the confetti percolation model \linebreak $(\Ri,\lambda_3,\alpha_3; \Bi,\lambda_2,\alpha_2)$.
		\end{lemma}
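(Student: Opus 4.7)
The plan is to construct the new red process by superposing the original red process with an independent Poisson process of small red squares. Fix $\alpha\in(0,R]$ to be chosen small, let $\mu:=\lambda_c(1)/(2\alpha^2)$, and set $\lambda_3:=\lambda_1+\mu$, with $\alpha_3$ distributed as the mixture $\alpha_3=\alpha_1$ with probability $\lambda_1/\lambda_3$ and $\alpha_3=\alpha$ with probability $\mu/\lambda_3$. By the colouring/superposition property of Poisson processes, the confetti model $(\Ri,\lambda_3,\alpha_3,\Bi,\lambda_2,\alpha_2)$ has the same distribution as the superposition of the original model with an independent red Poisson process of intensity $\mu$ and constant square side $\alpha$. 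A direct computation gives $\lambda_3\,\E(\alpha_3^2)=\lambda_1\,\E(\alpha_1^2)+\mu\alpha^2=\lambda_1\,\E(\alpha_1^2)+\lambda_c(1)/2$, and $\alpha_3\leq R$ since $\alpha\leq R$.

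For condition~(1), the event $G$ is determined entirely by the blue process, which is unchanged; hence $\P(G)$ is identical in both models. For condition~(2), note that on $G^c$ every point $\mathbf{x}\in R_N$ has a blue square within height $1$ above it, so any newly added red square at height greater than $1$ cannot be the lowest square above $\mathbf{x}$. Let $W$ denote the $\R^2$-projection of the newly added red squares whose heights lie in $(0,1]$. Then on $G^c$ the red region of the new model within $R_N$ is contained in the old red region in $R_N$ together with $W$. The set $W$ is a continuum Boolean model with Poisson intensity $\mu$ and square side $\alpha$, which after rescaling $\mathbf{x}\mapsto\mathbf{x}/\alpha$ is equivalent to a Boolean model of unit squares with intensity $\mu\alpha^2=\lambda_c(1)/2<\lambda_c(1)$, hence subcritical.

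Writing $\P^{\text{new}}$ for the law of the new model, this gives
\[
\P^{\text{new}}(H_N\cap G^c)\;\leq\;\P(H_N^{\text{old}}\cap G^c)+\P(\mathcal{E}_\alpha),
\]
where $\mathcal{E}_\alpha$ is the event that the old red region in $R_N$ has no horizontal crossing but (old red) $\cup\,W$ does. Condition~(2) for the original model provides strictly positive slack $\eta:=(\kappa_0-\delta)-\P(H_N^{\text{old}}\cap G^c)>0$, so it suffices to show $\P(\mathcal{E}_\alpha)\to 0$ as $\alpha\to 0$ and then pick $\alpha$ small enough to force $\P(\mathcal{E}_\alpha)<\eta$.

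The main obstacle is the control of $\P(\mathcal{E}_\alpha)$. By planar duality in the old confetti model, the absence of an old horizontal red crossing is equivalent to the existence of a vertical old-blue crossing of $R_N$; the event $\mathcal{E}_\alpha$ says that this blue crossing is destroyed once the occupied components of $W$ are removed. Subcriticality of $W$ together with the standard exponential decay of cluster diameters (Theorem~4.3 of Meester and Roy (1996)) shows that, for any fixed $d>0$, outside an event of probability $O(N^2 e^{-cd/\alpha})$ every connected component of $W$ inside $R_N$ has diameter at most $d$. It then remains to prove that, with high probability, the old-blue vertical crossing is ``thick'' enough to survive the removal of sets of diameter at most $d$; I would establish this by a discretisation of $R_N$ at scale $d$ followed by an FKG-based renormalisation argument, in the spirit of the proof of Theorem~\ref{th:cCAF1/2}, showing that the blue crossing can be realised as a connected path of ``good'' $d$-boxes whose goodness is preserved under the removal of small-diameter $W$-islands. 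Choosing $d$ first and then $\alpha$ much smaller reconciles all bounds and establishes condition~(2) for the new model.
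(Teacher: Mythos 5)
Your construction is exactly the paper's: superpose an independent red Poisson layer of intensity $\lambda_c(1)/(2\alpha^2)$ with squares of side $\alpha$, note that $G$ depends only on the blue process, and observe that on $G^{\complement}$ the new red region inside $R_N$ is contained in the old red region union the projected Boolean model $W$ of the added squares at heights in $(0,1]$. The moment computation and the reduction of condition (2) to bounding $\P(\mathcal{E}_\alpha)$ are all correct and match the paper.

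The gap is in the one step that actually carries the proof: you never establish $\P(\mathcal{E}_\alpha)\to 0$, and the route you sketch (dualising to a vertical blue crossing and then running an ``FKG-based renormalisation'' to show the blue crossing is $d$-thick) is both undeveloped and more machinery than the situation supports --- thickness of the blue crossing is not an increasing event, and even granting a renormalisation you would still need to know that the probability of a $d$-thick blue crossing converges to that of a blue crossing as $d\to 0$, which is the real content. The paper's argument for this step is elementary and is what you are missing: almost surely only finitely many squares are relevant to $R_N$, so on the complement of $H_N^{\text{old}}$ the distinct old red components of $R_N$ (and those components not meeting the left or right edge, relative to that edge) are at pairwise \emph{positive} distance. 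Hence the events $F_n = \{\text{no old crossing, all such separations} \geq 1/n\}$ increase to $\{\text{no old crossing}\}$, so one can fix $n_0$ with $\P(F_{n_0}^{\complement}\cap G^{\complement})<\kappa_0-\delta$. Tiling $R_N$ by $O(N^2/\alpha^2)$ boxes of side $\alpha$ and using exponential decay of cluster diameters in the subcritical model $W$, all $W$-clusters meeting $R_N$ have diameter $<1/(2n_0)$ outside an event of probability $O(N^2\alpha^{-2}e^{-c/(n_0\alpha)})$; on $F_{n_0}$ intersected with that likely event, no union of such small islands can bridge two $1/n_0$-separated red components or join a component to the boundary, so no new crossing is created. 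Choosing $\alpha$ (the paper's $1/m_0$) small then closes the argument. Without this separation event, your $d$-then-$\alpha$ choice has no justification.
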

	We first use the above lemma to prove the theorem.
		\begin{proof}[Proof of Theorem~\ref{th:cCAFrandom} using Lemma~\ref{lem:caf_random}]
			Let $N$, $\delta$ and $G$ be as in the previous lemma, and fix $0 < r < R$.  Choose $\mu$ large enough such that 	in the confetti model $(\Ri,0,r;\Bi,\mu,r)$,
			$$
			\mathbb{P}_{0,r, \mu,r} (G)<\delta .
			$$	
			Observe that the probability that there exists a horizontal red crossing of $R_N$ in the confetti model $(\Ri,0,r;\Bi,\mu,r)$ is 0. So the assumptions in Lemma~\ref{lem:caf_random} hold. Let $q\in \mathbb{N}$  be such that
			\begin{equation}
				\label{r:q}
				\frac{q\cdot\frac{\lambda_c^B(1)}{2}}{q\cdot\frac{\lambda_c^B(1)}{2} +\mu r^2}>t.
			\end{equation}
			Applying Lemma~\ref{lem:caf_random}  $q$ times, we obtain $\lambda$ and $\rho$ such that $\rho\leq r$ and 
			$ \lambda\mathbb{E}\left[\rho^2\right]=q\cdot\frac{\lambda_c^B(1)}{2} $. Also, in the confetti model $(\Ri,\lambda,\rho;\Bi,\mu,r)$, we have
			\begin{itemize}
				\item[(i)] $\mathbb{P}_{\lambda,\rho, \mu,r} (G)<\delta	$ and
				\item[(ii)] $\mathbb{P}_{\lambda,\rho, \mu,r} (H(R_N)\cap G^{\complement}) < \kappa_0-\delta$,
			\end{itemize}
			which yields, by Proposition~\ref{th:exp},
			$$\P_{\lambda,\rho, \mu,r} (\text{diam} (C^{\text{red}}({\bf 0}))\geq a)\leq c_1\exp(- c_2a),
			$$
			i.e.,  red does not  percolate in the confetti model $(\Ri,\lambda,\rho;\Bi,\mu,r)$. However, from~\eqref{r:q}, the covered area fraction of red for the above model is greater than $t$.
		\end{proof}
	To complete the proof of Theorem~\ref{th:cCAFrandom}, we now prove Lemma~\ref{lem:caf_random}. The idea of the proof is 
	the same as that of Meester, Roy and Sarkar (1994), viz. to create ``dust-like'' particles which contribute to the covered area fraction, but do not connect so as to percolate.
		\begin{proof}[Proof of Lemma~\ref{lem:caf_random}]
			Let $H_0(R_N)$ be the event that  $R_N$ admits a  red horizontal crossing in the confetti model $(\Ri,\lambda_1,\alpha_1;\Bi, \lambda_2,\alpha_2)$. Let $\Ri'$ be an  independent Poisson process on $\R^2 \times (0, \infty)$ of intensity $m^2\bar{\lambda}$ and at each point of this process we centre a square with sides of length $1/m$ where  $m\in \mathbb{N}$ and $\bar{\lambda} =\frac{\lambda_c^B(1)}{2}$. The superposed model  $(\Ri\cup\Ri',\mu_m,\varrho_m;\Bi, \lambda_2,\alpha_2)$, has intensity $\mu_m :=\lambda_1+m^2\bar{\lambda}$ and squares with sides of length given by the random variable 
			\[
			\varrho_m :=\left\{ \begin{array}{ll}
				\alpha_1&\mbox{ with probability }\frac{\lambda_1}{\mu_m}\\
				\frac{1}{m}&\mbox{ with probability }1-\frac{\lambda_1}{\mu_m}.
			\end{array} \right.
			\]
			
			We consider the colouring of $\R^2$ obtained in two distinct ways. 
			First, we colour $\R^2$ by the superposed model $(\Ri\cup\Ri',\mu_m,\varrho_m;\Bi, \lambda_2,\alpha_2)$.  Let $H_m(R_N)$ be the event that  $R_N$ admits a red  horizontal crossing in this colouring. Second, we colour $\R^2$ by the confetti model $(\Ri,\lambda_1,\alpha_1;\Bi, \lambda_2,\alpha_2)$ and, on it, we superpose 
			$\left(\pi\left(\Ri'\cap(\R^2\times[0,1])\right),m^2\bar{\lambda},\frac{1}{m}\right) $, the continuum Boolean model 
			arising as a projection of $\Ri'\cap(\R^2\times[0,1])$. Thus an earlier blue point of $\R^2$ may become red because of a square on it coming from the process $\pi\left(\Ri'\cap(\R^2\times[0,1])\right)$. Let $\tilde{H}_m(R_N)$ be the event that  $R_N$ admits a horizontal red crossing in this superposed model.	 
			
			Noting that on $G^{\complement}$ the points of the processes above a height $1$ do not affect  the colouring of $R_N$, we have $H_m(R_N)\cap G^{\complement}\subseteq \tilde{H}_m(R_N)\cap G^{\complement}$.
			
			For any $n\geq 1$, let $F_n$ be the event that in the  confetti percolation model $(\Ri,\lambda_1,\alpha_1;\Bi, \lambda_2,\alpha_2)$,
			\begin{itemize}
				\item[(i)] $H_0(R_N)$ does not occur,
				\item[(ii)] any two red components in $R_N$ is separated by a distance at least $1/n$, and
				\item[(iii)] any red component that does not touch either the left or the right boundary of $R_N$ is at a distance at least $1/n$ away from them.
			\end{itemize}
			Since  $F_n^{\complement}\to H_0(R_N) $ as $n \to \infty$ and  $\mathbb{P}(H_0(R_N)\cap G^{\complement})<\kappa_0-\delta$, we can get $n_0$ such that 
			\begin{equation}
				\label{r:Gc}
				\mathbb{P}(F_{n}^{\complement}\cap G^{\complement})<\kappa_0-\delta \text{ for all } n \geq n_0.
			\end{equation}
			
			Let $B(y,t)=\left\{z\in \mathbb{R}^2 \,:\, \|z-y\|_{\infty}\leq \frac{t}{2} \right\}$. Also, take $C_m(B(0,s))$ to be the union of all the connected occupied components of the continuum Boolean percolation model $ \left(\pi\left(\Ri'\cap(\R^2\times[0,1])\right),m^2\bar{\lambda},\frac{1}{m}\right) $ which have non-empty intersection with  $B(0,s)$.
			Since $\bar{\lambda} <\lambda_c^B(1)$, from Theorems 4.3 and 4.4 together with Lemma 4.1 of Meester and Roy (1996),  we have
			\[
			\mathbb{P}(\text{diam}\left(C_m(B(0,1/m))\right)>1/(2n_0))\leq c_1\exp( - c_2m/(2n_0))
			\]		 
			for some constants $c_1,c_2$ depending only on $\bar{\lambda}$.	
			
			Now, tile $R_N$  by  $3N^2m^2$ many small squares  with sides of length $\frac{1}{m}$. Label them as $W_1,W_2,\ldots$ and let 		
			\[
			A^m_{n_0}=\bigcup_{i=1}^{3N^2m^2}\left\{\text{diam}(C_m(W_i))>1/(2n_0)\right\}.
			\]
			
			Observe that $\tilde{H}_{m_0}(R_N)\subseteq F_{n_0}^{\complement}\cup A^{m_0}_{n_0}$, and so
			$H_{m_0}(R_N)\cap G^{\complement}\subseteq \left(F_{n_0}^{\complement}\cap G^{\complement}\right)\cup A^{m_0}_{n_0}$. Thus
			$
			\mathbb{P}\left(H_{m_0}(R_N)\cap G^{\complement} \right) < \kappa_0 - \delta \text{ for $m_0$ large enough},
			$
			where we have used~\eqref{r:Gc} and the fact that 
			$\mathbb{P}\left(A^m_{n_0}\right)\leq 3N^2m^2c_1e^{-c_2\frac{m}{2n_0}}$.
			
			Taking  $\lambda_3=\mu_{m_0}$	and	$\alpha_3=\varrho_{m_0}$, we have 	
			\begin{itemize}
				\item[(i)] $\displaystyle \alpha_3<R$, 
				\item[(ii)] $\displaystyle\lambda_3\mathbb{E}\left[\alpha_3^2\right]= \lambda_1\mathbb{E}\left[\alpha_1^2\right]+m_0^2\bar{\lambda}\cdot\frac{1}{m_0^2}= \lambda_1\mathbb{E}\left[\alpha_1^2\right]+\frac{\lambda_c^B(1)}{2}$,
				\item[(iii)] $\displaystyle \mathbb{P}_{\lambda_3,\alpha_3, \lambda_2,\alpha_2}\left(H_N\cap G^{\complement}\right)=\mathbb{P}\left(H_{m_0}(R_N)\cap G^{\complement} \right)< \kappa_0-\delta$,
				\item[(iv)] $\displaystyle \mathbb{P}_{\lambda_3,\alpha_3, \lambda_2,\alpha_2} (G)=\mathbb{P}_{\lambda_1,\alpha_1, \lambda_2,\alpha_2} (G)<\delta$,
			\end{itemize}
			which completes the proof of the lemma.	
		\end{proof}

		\section{Voronoi percolation}
		\label{Voronoi}
		
		The Voronoi model of percolation is another model with the self-dual property. In the same vein as the confetti percolation model, we set up the model with `varying speeds' as follows:
		For $\lambda \in [0,1]$, let 
		\begin{itemize}
			\item[(i)] $\Pc:= (X_1, X_2, \ldots) $  be a Poisson process of intensity $1$ on $ \R^2 $,
			\item[(ii)] $\boldsymbol {\varsigma} := (\varsigma_1, \varsigma_2, \ldots)$ be  a collection of i.i.d. positive, bounded random variables,
			\item[(iii)] $\boldsymbol {\tau} := (\tau_1, \tau_2, \ldots)$ be a collection of i.i.d. positive, bounded  random variables,
			\item[(iv)] $\boldsymbol {\varphi} := (\varphi_1, \varphi_2, \ldots )$  be a collection of i.i.d. Bernoulli random variables taking values $1$ and $0$ with probabilities $\lambda$ and 
			$1- \lambda$ respectively.
		\end{itemize}
		We assume the four processes above are independent of each other.
		
	Let 
			\begin{align*}
				{\bf C}_i  &
				:= \left\{x: \frac{||x- X_i||_2}{\varsigma_i 1_{\{\varphi _i = 1\}} + \tau_i 1_{\{\varphi _i = 0\}}} \leq \frac{||x- X_j||_2}{\varsigma_j1_{\{\varphi _j = 1\} }+ \tau_j1_{\{\varphi _j = 0\}}} \text{ for all } j \neq i\right\} ,\\
				{\bf S}_i & := \begin{cases} C_i & \text{ if } \varphi_i = 1\\
					\emptyset &  \text{ if } \varphi_i = 0\end{cases}\\
				{\bf T}_i & := \begin{cases} C_i & \text{ if } \varphi_i = 0\\
					\emptyset &  \text{ if } \varphi_i = 1\end{cases}
		\end{align*} 
		Thus $\R^2$ is partitioned into a `shaded' region (i.e., union of sets of the form ${\bf S}_i$) and a `tiled' region (i.e., union of sets of the form ${\bf T}_i$). We denote this model by $({\cal P}, \lambda, \varsigma,\tau)$.  
		
		Let $C^S$ and $C^T$ be the (random) shaded/tiled regions of $\R^2$ in this model. Voronoi percolation studies questions of shaded/tiled percolation on the $x-y$ plane $\R^2$, i.e., for what values of the parameters 
		$\lambda$, $\varsigma$,  $\tau$ do we have an unbounded connected shaded component of $C^S$ or an unbounded connected tiled component of $C^T$ (or possibly both/neither).
		
		Taking  $C^S({\bf 0})$ and $C^T({\bf 0})$ to be, respectively,  the maximal connected components of $C^S$ and $C^T$ containing the origin ${\bf 0}$, let
		$$
		\lambda_v (\varsigma,\tau) := \inf\{\lambda : \P_\lambda(\text{diam} (C^S({\bf 0})) = \infty) > 0\}.$$
		
		When $\varsigma = \tau = \text{ constant}$,  Bollob\'{a}s and Riordan (2006a) showed that $\lambda_v = 1/2$ (note here that in this case, $\lambda_v$ does not depend on the common constant value taken by $\varsigma$ and $\tau$).
		
		When $\varsigma$ and $\tau$ are constants not necessarily the same,  Kira, Neves and Schonmann (1998) conjectured that  $\lambda_v(\varsigma, \tau) = \frac{\tau^2}{\varsigma^2+\tau^2}$ for all possible $\varsigma$ and $\tau$.

		For this Voronoi setup, we obtain results similar to those for the confetti percolation model. The following theorem was proved by Ahlberg, Tassion and Texeira (2018)  (see Theorem~8.2) via a Russo-Seymour-Welsh method, and in addition, they obtained that at criticality neither regions percolate.
		\noindent Based on methods developed by Duminil-Copin, Raoufi and Tassion (2019a), we have
		
			\begin{theorem}
				\label{r-vthm1}
				For $\varsigma$ and $\tau$ bounded random variables, we have
				$$
				\lambda_v(\varsigma,\tau) + \lambda_v(\tau, \varsigma) =  1.
				$$
				
				In particular, when $\varsigma \stackrel{d}{=} \tau$,
				$$
				\lambda_v(\varsigma,\tau) =1/2.
				$$
			\end{theorem}
			To prove the above, we first note that a sharp threshold result for  the Voronoi percolation model akin to  Proposition~\ref{theo1} for  the confetti percolation model follows immediately on changing the definition of $E_z$ of Duminil-Copin, Raoufi and Tassion (2019b) (page 487) to be  that $E_z$  is the event the $\eta_b$ 
			does not intersect the Euclidean ball of radius $( ||x-z||_2 -3\sqrt{2}) \frac{r}{R}$ around $z$.
			Now, the argument to show Theorem~\ref{th:dual} may be repeated to prove Theorem~\ref{r-vthm1}.

			For the Voronoi percolation model, the transitivity condition of Section~\ref {Sec:intro} is 
		
			\begin{align}
				\label{r-VC}
				\left(\frac{1}{\lambda_v(\varsigma,\tau)}-1\right)\left(\frac{1}{\lambda_v(\tau, \delta)}-1\right)\leq \frac{1}{\lambda_v(\varsigma, \delta)}-1,
			\end{align}
			for $\varsigma$, $\tau$ and $\delta$ constants, not necessarily equal. 
		
		As earlier, we define the covered area fraction as
		$\text{CAF}_{\text{shaded}}(\lambda, \varsigma,\tau)$ for the Voronoi model where 
		$$
		\text{CAF}_{\text{shaded}}(\lambda, \varsigma,\tau) := \E_\lambda\left[\ell(\text{shaded region in } [0,1]^2)\right].
		$$
		The critical covered area fraction is defined as 
		$$
	\text{cCAF}_{\text{shaded}}(\varsigma,\tau) := \E_{\lambda_v(\varsigma,\tau)}\left[\ell(\text{shaded region in } [0,1]^2)\right].
		$$
		From Theorem~\ref{r-vthm1},
		we have that $\text{cCAF}_{\text{shaded}}(\varsigma,\tau) = 1/2$ for $\varsigma \stackrel{d}{=} \tau$, both bounded and also bounded away from zero.

		Also, using a sharp threshold result akin to Proposition~\ref{theo1} and through an analysis of the covered area fraction, we have for the Voronoi percolation model:
		
		\begin{theorem}
			\label{th:cCAF1/3}
			The following are equivalent:
			\begin{itemize}
				\item[(i)] The transitivity condition~\eqref{r-VC} holds.
				\item[(ii)] For any $\varsigma$, $\tau$ positive constants, the critical covered area fraction for $(\Pc, \lambda, \varsigma, \tau)$ equals $1/2$.
				\item[(iii)] For any $\varsigma$, $\tau$ positive constants in the Voronoi model $(\Pc,  \lambda,\varsigma,   \tau)$,  the shaded region percolates if  $\lambda> \frac{\tau^2}{\varsigma^2 + \tau^2}$ and the tiled region percolates if  $\lambda < \frac{\tau^2}{\varsigma^2 + \tau^2}$, i.e., 
				$\lambda_v(\varsigma,\tau) = \frac{\tau^2}{\varsigma^2 + \tau^2}$.
			\end{itemize}
		\end{theorem}  
		In case the transitivity condition holds, the above theorem provides us  exact values of the critical parameter $\lambda_v(\varsigma,\tau)$ as conjectured by Kira, Neves and Schonmann (1998). A sufficient condition analogous to Proposition~\ref{r:suff}, as well as the subsequent discussion in Remark~\ref{rem:suff}, also applies to the Voronoi percolation model.
		
		The proofs of Theorem~\ref{v:cCAFrandom} and Proposition~\ref{CAFVoronoi}  (stated below)	follow, with minor changes, from the proofs of 
Theorem~\ref{th:cCAFrandom}	and 	Proposition~\ref{th:caf} and, as such,  we omit them here.
				\begin{theorem}
			\label{v:cCAFrandom}
			For any $t \in (0,1)$, there exists a Voronoi model $(\Pc, \lambda, \varsigma, \tau)$, with $\varsigma$ and $\tau$ random,   for which $CAF_{\text{shaded}}(\lambda,\varsigma,\tau) < t$ but the shaded region  percolates.
		\end{theorem}

				\begin{prop}
			\label{CAFVoronoi}
			For $\varsigma$ and $\tau$  two positive random variables with finite second moments, we have 
			\[
			\text{CAF}_{\text{shaded}}(\lambda,\varsigma,\tau) = \frac{\lambda\mathbb{E}[\varsigma^2]}{\lambda\mathbb{E}[\varsigma^2]+(1-\lambda)\mathbb{E}[\tau^2]}.
			\]
		\end{prop}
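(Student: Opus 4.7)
The plan is to mirror the proof of Proposition \ref{th:caf}. By the ergodicity observation recorded just above the statement, it suffices to compute $\P_p(\text{origin is shaded})$. Every point $x\in\R^2$ lies in the cell $\mathbf{C}_i$ of the almost surely unique Poisson point $X_i$ minimising the weighted distance $\|x-X_j\|/s_j$, where $s_j$ denotes the speed carried by $X_j$ (either $\sigma_j$ or $\tau_j$ according to whether $\varphi_j=1$ or $0$). In particular, the origin is shaded iff the minimiser of $\|X_j\|/s_j$ is a shaded point. By the thinning property of Poisson processes, the shaded and the tiled points form two independent marked Poisson processes on $\R^2$ of intensities $\lambda_s$ and $\lambda_t$, with i.i.d.\ speed marks distributed as $\sigma$ and $\tau$ respectively.

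The key step I would carry out is an application of the mapping theorem to each marked subprocess. For a marked Poisson process on $\R^2$ of intensity $\lambda$ with i.i.d.\ speed marks distributed as $s$ with finite second moment, the image under $(X,s)\mapsto \|X\|/s$ is a Poisson process on $(0,\infty)$; a direct polar-coordinate computation identifies its intensity as $2\pi\lambda\E(s^2) t\,dt$. Applied separately to the shaded and tiled subprocesses, this produces two independent inhomogeneous Poisson processes on $(0,\infty)$ with rates $c_s t$ and $c_t t$, where $c_s:=2\pi\lambda_s \E(\sigma^2)$ and $c_t:=2\pi\lambda_t\E(\tau^2)$. Assumption \ref{r:vass} ensures the intensities are locally finite so that the infimum of arrivals is a.s.\ attained.

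To conclude, I would compute the probability that the smallest arrival in the superposition is shaded. The change of variable $u=t^2/2$ converts the two inhomogeneous Poisson processes into homogeneous Poisson processes of rates $c_s$ and $c_t$, whence the first arrival of the superposition is of the shaded type with the standard probability $c_s/(c_s+c_t)$, which simplifies to
\[
\text{CAF}_{\text{shaded}}(\Si,\Ti) \;=\; \frac{\lambda_s \E(\sigma^2)}{\lambda_s \E(\sigma^2) + \lambda_t \E(\tau^2)}.
\]
Alternatively, one can reproduce the $n$-colour bookkeeping used in Proposition \ref{th:caf}: treat $n$ independent Poisson processes with fixed speeds $r_1,\ldots,r_n$, obtain $\mbox{CAF}_i = \lambda_i r_i^2/\sum_j \lambda_j r_j^2$ by the same mapping argument, then group the colours into shaded/tiled classes and truncate via dominated convergence to pass to general $\sigma,\tau$ with finite second moments. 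I do not foresee a substantive obstacle here; the only point demanding care is the polar-coordinate identification of the image intensity $2\pi\lambda\E(s^2) t$, which is a routine application of the mapping theorem.
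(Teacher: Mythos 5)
Your proposal is correct, and its core idea coincides with the paper's: both reduce the computation to asking which type of point achieves the minimum of the weighted distance $\|X_j\|/s_j$, and both settle that question with the Poisson mapping theorem. The execution differs in a worthwhile way. The paper first treats an $n$-type model with \emph{fixed} speeds $v_1,\dots,v_n$, rescales each subprocess via $X\mapsto X/v_i$ to get a homogeneous Poisson process of intensity $\lambda_i v_i^2$ on $\R^2$, reads off that the nearest point of the superposition is of type $j$ with probability $\lambda_j v_j^2/\sum_i \lambda_i v_i^2$, and only then reaches random $\sigma,\tau$ by grouping colours and a truncation plus dominated convergence step. You instead apply the mapping theorem once to each \emph{marked} subprocess via $(X,s)\mapsto\|X\|/s$, obtaining inhomogeneous Poisson processes on $(0,\infty)$ with intensities $2\pi\lambda_s\E(\sigma^2)t\,dt$ and $2\pi\lambda_t\E(\tau^2)t\,dt$ (your polar-coordinate identification is right), and conclude by the type of the first arrival of the superposition; this handles random speeds in one stroke and dispenses with the discretisation and limiting argument. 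Two small remarks: the local finiteness you need is exactly the finite-second-moment hypothesis of the proposition (citing Assumption \ref{r:vass} is unnecessarily strong, and the proposition is stated more generally), and you should note that at least one of $\E(\sigma^2),\E(\tau^2)$ must be positive for the infimum to be a.s.\ attained (and for the stated ratio to make sense). Neither affects the validity of the argument.
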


		\begin{proof}[Proof of Theorem~\ref{th:cCAF1/3}]
			The proofs of \textit{(ii) $\Leftrightarrow$ (iii)}  and \textit{(ii) $\Rightarrow$ (i)}  follow on the same lines as those in the proof of Theorem~\ref{th:cCAF1/2}.
			
			To prove \textit{(i) $\Rightarrow$ (ii)} we  will use the following result of Kira, Neves and Schonmann (1998):		
			\begin{prop}
				\label{KNS}
				For the Voronoi percolation model $(\Si,\alpha,1;\Ti,1-\alpha,\frac{1}{t})$,
				there exist positive  constants $0 <  c \leq C < \infty$, such that for small	enough $ t $,
				\begin{itemize}
					\item[(i)] the shaded region  percolates if $ 1-\alpha< c t^2 $, and
					\item[(ii)] the tiled region percolates if $ 1-\alpha> C t^2 $.
				\end{itemize}
			\end{prop}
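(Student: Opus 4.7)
My strategy is to dominate each regime by a continuum Boolean model after a convenient rescaling. First rescale growth speeds by $r$ so the shaded points move at speed $r$ and tiled points at speed $1$, and then rescale space by $\sqrt{1-\alpha}$ so that the tiled Poisson process has intensity $1$; the shaded process then has intensity $\mu:=\alpha/(1-\alpha)$. An Apollonius-circle computation shows that, for a shaded point $S$ and a tiled point $T$, the region $\{x:\|x-S\|\leq r\|x-T\|\}$ is a Euclidean disk satisfying
\[
B\!\left(S,\tfrac{r\|S-T\|}{1+r}\right)\subseteq \{x:\|x-S\|\leq r\|x-T\|\}\subseteq B\!\left(S,\tfrac{r\|S-T\|}{1-r}\right).
\]
Intersecting over all tiled $T$ and letting $D(S)$ be the distance from $S$ to the nearest tiled point, the shaded region around $S$ (ignoring other shaded points) is sandwiched between $B(S,rD(S)/(1+r))$ and $B(S,rD(S)/(1-r))$; thus shaded regions are essentially random-radius balls of radii of order $r\cdot D(S)$, further trimmed by ordinary Voronoi competition among shaded points.

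For the shaded percolation claim ($1-\alpha<cr^2$), I would partition $\R^2$ into the tiled-void set $V_{R_0}:=\{x:\text{no tiled point within distance }R_0\text{ of }x\}$ and its complement, choosing $R_0$ below the critical radius of continuum Boolean percolation at intensity $1$, so that $V_{R_0}$ contains an unbounded connected component. Inside $V_{R_0}$ the nearest tiled point to any $x$ is at distance at least $R_0$, so any shaded point within distance $rR_0/(1+r)$ of $x$ forces $x$ to be shaded. Hence, restricted to $V_{R_0}$, the shaded region dominates the occupied set of a continuum Boolean model with intensity $\mu$ and ball radius $rR_0/(1+r)$, whose percolation parameter is of order
\[
\mu r^2 R_0^2=\frac{\alpha r^2}{1-\alpha}\,R_0^2\;\geq\;\frac{\alpha R_0^2}{c}.
\]
Taking $c$ sufficiently small makes this parameter exceed the Boolean critical threshold uniformly in $r$, and a standard renormalisation converts this to shaded percolation in the original model via coarse-grained boxes that simultaneously contain a $V_{R_0}$-crossing and a shaded Boolean crossing inside it.

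For the tiled percolation claim ($1-\alpha>Cr^2$), shaded islands are both rare and small. The union of shaded islands is contained in the Cox-type Boolean model with intensity $\mu$ (independent of the tiled process) and independent random radii $rD(S)/(1-r)$. Using $\E[D(S)^2]=1/\pi$ for the nearest-neighbour distance in an intensity-$1$ Poisson process, the expected covered-area fraction of this Boolean model satisfies
\[
\mu\cdot \E\!\left[\pi\bigl(rD(S)/(1-r)\bigr)^2\right]\;=\;\frac{\mu r^2}{(1-r)^2}\;\leq\; \frac{2\alpha r^2}{1-\alpha}\;\leq\;\frac{2}{C}
\]
for $r$ small enough. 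For $C$ large this Cox--Boolean model is subcritical (after truncating the exponential tail of $D(S)$ at a large deterministic radius and comparing with a homogeneous Boolean model of the same intensity and maximal radius), so its occupied set has only bounded components and its complement percolates. Since the tiled Voronoi region contains this complement, tiled percolation follows.

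The main obstacle will be the quantitative subcriticality in the tiled case: the radii $D(S)$ are correlated with the tiled configuration, so textbook Boolean subcriticality does not apply directly. I would handle this by conditioning on the tiled process, viewing the shaded centres as an independent Poisson process with conditionally deterministic radii, and controlling the exponential tail of $D(S)$ uniformly via a Palm-type calculation for the tiled Poisson process. A secondary but easier point is the passage from continuum Boolean subcriticality on the shaded side to an unbounded connected component of the tiled complement, which is standard via a box-renormalisation/duality argument on a coarse grid.
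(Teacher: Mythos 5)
First, a point of comparison: the paper does not prove this proposition at all — it is quoted from Kira, Neves and Schonmann (1998) and used as a black box in the proof of Theorem \ref{th:cCAF1/3} — so there is no internal proof to measure you against, and your attempt must stand on its own. Much of your setup is sound: the two rescalings, the Apollonius sandwich $B(S,r\norm{S-T}/(1+r))\subseteq\{x:\norm{x-S}\leq r\norm{x-T}\}\subseteq B(S,r\norm{S-T}/(1-r))$, the containment of each shaded cell in $B(S,rD(S)/(1-r))$, and the two parameter counts ($\mu r^2R_0^2\gtrsim R_0^2/c$ on the shaded side, $\mu r^2/(1-r)^2\leq 2/C$ on the tiled side, using $\E[D^2]=1/\pi$) are all correct, and the directions of the two claims are consistent with the conjectured value $p_v=1/(1+r^2)$.

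There are, however, two genuine gaps. On the shaded side, the step you defer to ``standard renormalisation'' is the entire content of that half: percolation of $V_{R_0}$ and supercriticality of the shaded Boolean model do not, by themselves, give percolation of their intersection, since the Boolean crossing must be threaded through the random corridors of $V_{R_0}$. One must actually run a two--scale argument — e.g.\ work with crossings of $V_{2R_0}$, which guarantee corridors of width $\geq R_0$ inside $V_{R_0}$, and show that the Boolean model at scale $rR_0\ll R_0$, made highly (not merely) supercritical by taking $c$ small, crosses every such corridor with probability near $1$ by gluing crossings of overlapping $R_0$--squares. On the tiled side the gap is more serious: small $\lambda\E[\rho^2]$ does \emph{not} imply subcriticality of a Boolean model with random radii — the phenomenon behind this paper's own Theorems \ref{th:cCAFrandom} and \ref{v:cCAFrandom} is exactly that the covered area fraction does not govern criticality — so your covered--area computation proves nothing by itself. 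Your proposed repair (truncate $D(S)$ at a deterministic level and compare with a homogeneous model of maximal radius) leaves an unbounded--radius remainder which is again a random--radius Boolean model; closing this requires either a multiscale/dyadic--layer argument exploiting the Gaussian tail $\P(D>t)=e^{-\pi t^2}$, or a quantitative subcriticality criterion in terms of $\lambda\E[\rho^2]$ (Gou\'er\'e--type). Finally, the radii $rD(S)/(1-r)$ are positively correlated through the common tiled process, so i.i.d.--radius subcriticality results cannot be invoked off the shelf; you correctly flag this, but the ``Palm--type calculation'' you promise is precisely the missing argument rather than a routine detail, and one must also supply the vacant--side (duality) step to get an unbounded tiled component rather than merely bounded shaded clusters.
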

			
			First we show that (ii) of Theorem~\ref{th:cCAF1/3}
holds under the transitivity condition. In case it does not, then there exist $\gamma_1$, $v_1$, $\gamma_2$, $v_2$ such that $\gamma_1v_1^2<\gamma_2v_2^2$ but the shaded region  is supercritical in the model $(\Si,\gamma_1,v_1;\Ti,\gamma_2,v_2)$. As in the confetti percolation, here also only the ratio of the growth speeds and the ratio of the intensities play a role in percolation. Let $v=v_2/v_1$ and  $\gamma=\gamma_2/\gamma_1$. Clearly, we have $\gamma v^2 >1$. A similar argument as in the case of confetti percolation gives us  that  this model is equivalent to the model
			$(\Si,1,1;\Ti,\gamma,v)$ which, in turn,  is  equivalent to the model $(\Si,\gamma^{k-1},v^{k-1};\Ti,\gamma^k,v^k)$ for any $k\in \N$. Therefore, the supercriticality of  the shaded region in the Voronoi model $(\Si,1,1;\Ti,\gamma^k,v^k)$ follows from the transitivity condition.
			It is clear that the shaded region  cannot be supercritical if $v=1$ and $\gamma>1$. So we need to consider the two cases $v>1$, $\gamma<1$ and $v<1$, $\gamma>1$.
			
			\vspace{.3cm}
			
			\textit{Case I. $\gamma<1$, $v>1$}: Since $\gamma v^2>1$, we have
			\[
			\lim_{n \to \infty} \frac{\gamma^nv^{2n}}{\gamma^n+1}=\infty,
			\]
			so there exists $k_1\in\N$ such that
			\[
			\frac{\gamma^{k_1}}{\gamma^{k_1}+1}>\frac{C}{v^{2{k_1}}}, \text{ where } C \text{ is as in Proposition~\ref{KNS}}.
			\]
			Therefore, the tiled region percolates in the model $(\Si,\frac{1}{1+\gamma^{k_1}},1;\Ti,\frac{\gamma^{k_1}} {1+ \gamma^{k_1}},v^{k_1})$. This  leads to a contradiction because, by scaling, the models $(\Si,\frac{1}{1+\gamma^{k_1}},1;\Ti,\frac{\gamma^{k_1}} {1+ \gamma^{k_1}},v^{k_1})$ and $(\Si,1,1;\Ti,\gamma^{k_1},v^{k_1})$ are equivalent.
			
			\vspace{.3cm}
			
			\textit{Case II. $\gamma>1$, $v<1$}: Consider the model $(\Si,1,1;\Ti,\frac{1}{\gamma^n},\frac{1}{v^n})$. Since $\gamma v^2>1$, we have
			\[
			\lim_{n \to \infty}\frac{\frac{1}{\gamma^n}}{1+\frac{1}{\gamma^n}}\cdot\frac{1}{v^{2n}}=\lim_{n \to \infty}
			\frac{1}{v^{2n}+\gamma^nv^{2n}}=0.
			\]
			So there exists $k_2\in\N$ such that
			\[
			\frac{\frac{1}{\gamma^{k_2}}}{1+\frac{1}{\gamma^{k_2}}}<cv^{2k_2}, \text{ where }  c \text{ is as in Proposition~\ref{KNS}}.
			\]
			
			Therefore the shaded region  percolates in the model $(\Si,1,1;\Ti,\frac{1}{\gamma^{k_2}},\frac{1}{v^{k_2}})$. Equivalently, the tiled region percolates in the model $(\Si,1,1;\Ti,\gamma^{k_2},v^{k_2})$, which leads to a contradiction. 
		\end{proof}

		\begin{proof}[Proof of  Theorem~\ref{v:cCAFrandom}]
			For the proof of  Theorem~\ref{v:cCAFrandom}, we need a result for the Poisson Boolean  model which is of independent interest. We relegate   its proof to the appendix.
			
			%

			Let $(\Xi_1,\lambda_1, \rho^{(1)} )$ and $(\Xi_2, \lambda_2, \rho^{(2)})$ be two independent Poisson Boolean models with $ 0 <  \rho^{(1)},  \rho^{(2)} \leq R$ and  $C^{(1)}$, $C^{(2)}$  their respective covered regions. Let $\Ci = C^{(1)} \setminus C^{(2)}$ and $\Di = \R^2 \setminus \Ci$. Let $R_N=[0,N]\times [0,3N]$.
			\begin{prop}
				\label{r-Bool1}
				
				(i) There exists $\kappa_0 > 0$ such that, in the superposition of the Boolean models $\Xi_1$ and $\Xi_2$, if, 
				for some $N > R$,
				$$
				\P\left(\text{there is a component of $\Di\cap R_N$ connecting the left and right edges of $R_N$}\right) < \kappa_0
				$$
				then, for $\Di({\bf 0})$ the connected component of $\Di$ containing the origin $\bf 0$, we have
				$$
				\P(\text{diam}(\Di({\bf 0})) \geq a) \leq K_1 \exp(-K_2 a)
				$$
				for all $a > 0$ and  positive constants $K_1$ and $K_2$ depending only on $\kappa_0$.
				
				(ii) Also, for all $\lambda < \lambda_c^B(1)$, there exists a Poisson Boolean model
				$(\Xi_3, \frac{\lambda}{v^2}, v)$ for some $v \in (0,1)$ (independent of the other two processes) with $C^{(3)}$ being its covered region such that in the superposition of the Boolean models $\Xi_1$, $\Xi_2$ and $\Xi_3$,
				taking $\Ei = \Di \cup C^{(3)}$, with $\Ei ({\bf 0})$, the connected component of $\Ei$ containing the origin $\bf 0$, we have
				$$
				\P(\text{diam}(\Ei({\bf 0})) \geq a) \leq K_1 \exp(-K_2 a)
				$$
				for all $a > 0$ and positive constants  $K_1$ and $K_2$ depending only on $\kappa_0$.
			\end{prop}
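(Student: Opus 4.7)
The plan is to adapt the renormalization argument of Lemma 3.3 of Meester and Roy (1996) to the two-process setting in (i), and then in (ii) to absorb the third, subcritical Boolean model by shrinking its shapes.

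\textbf{Part (i).} I would first observe that $\Di = (C^{(1)})^c \cup C^{(2)}$, so the configuration of $\Di$ inside any region $V\subset\R^2$ is determined by the Poisson points of $\Xi_1\cup\Xi_2$ whose shapes can reach $V$; as the shape diameters are bounded by $R$, this amounts to points lying in the $R$-fattening $V^{(R)}$. Consequently, $\Di$-crossing events of rectangles at mutual distance exceeding $2R$ are independent, and (being jointly decreasing in $\Xi_1$ and increasing in $\Xi_2$) satisfy the FKG inequality on the two-marginal Poisson space. Next, a standard planar tiling argument covers an annulus around ${\bf 0}$ of width $\sim a$ by translates and rotations of $R_N$ such that, if $\mathrm{diam}(\Di({\bf 0}))\geq a$, a $\Di$-crossing exists in at least $c\, a/N$ of them. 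Arranging the tiles at mutual distance larger than $2R$ produces stochastic domination by an $m$-dependent site percolation with site probability at most $\kappa_0$; the Liggett--Schonmann--Stacey comparison theorem then dominates this by a subcritical independent Bernoulli site percolation, provided $\kappa_0$ is small enough. A standard Peierls-type count yields $\P(\mathrm{diam}(\Di({\bf 0}))\geq a)\leq K_1 e^{-K_2 a}$.

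\textbf{Part (ii).} The Boolean model $(\Xi_3,v,\lambda/v^2)$ is scale equivalent to $(\Xi_3,1,\lambda)$, which is subcritical since $\lambda<\lambda_c(1)$. By Theorems~3.5 and 4.3 of Meester and Roy (1996), the $C^{(3)}$-cluster diameters have exponentially decaying tails with constants depending only on $\lambda$. Fix a geometric threshold $\rho\in(0,R/2)$ well below the $2R$-separation used in (i). Modify the block step of (i) by calling a block \emph{bad} if either $R_N$ admits a $\Di$-crossing, or some $C^{(3)}$-cluster meeting the $R$-fattening of the block has diameter at least $\rho$. Scale equivalence together with the subcritical tail bound makes the probability of the second event arbitrarily small on choosing $v$ small enough, so the combined bad-block probability stays below $\kappa_0$. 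The key geometric observation is that any $\Ei$-component of diameter $a$ either contains a $C^{(3)}$-cluster of diameter $\geq\rho$ (which lives inside a fattened block, marking it bad) or, in its absence, must cross each annular shell entirely through $\Di$, again marking the pierced blocks bad. The $m$-dependent comparison argument from (i) then transfers verbatim and yields the exponential bound on $\mathrm{diam}(\Ei({\bf 0}))$.

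The main obstacle I anticipate is the geometric ``no-bridging'' step in part (ii): making precise that small $C^{(3)}$-clusters, of diameter at most $\rho\ll R$, cannot thread a long $\Ei$-path through otherwise $\Di$-disconnected good blocks. This is resolved by choosing the inter-block spacing strictly larger than $\rho+2R$, so that any $C^{(3)}$-cluster that actually helps propagate an $\Ei$-component is forced to lie within one (fattened) block and, being of diameter at least $\rho$, declares that block bad. Once this bookkeeping is set up, the stochastic-domination and Peierls-counting steps of part (i) apply without change.
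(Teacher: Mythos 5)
Your part (i) is essentially the paper's argument: the same $D_0(z)\subset D_1(z)$ block construction, the bound $q\le 4\P(\text{crossing of }R_N)$, finite-range dependence from the bounded shape radius $R$, and a Peierls count over lattice animals (the paper counts directly via Kesten's $(9e)^n$ bound and extracts $n/11^2$ independent sites rather than invoking Liggett--Schonmann--Stacey, but this is cosmetic).

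Part (ii), however, has a genuine gap in the ``key geometric observation.'' You declare a block bad if it contains a $\Di$-crossing or a $C^{(3)}$-cluster of diameter at least a \emph{fixed} threshold $\rho$, and claim that otherwise an $\Ei$-component must traverse the block entirely through $\Di$. This fails: distinct connected components of $\Di$ have no deterministic lower bound on their mutual separation, so an $\Ei$-crossing can be threaded through many $C^{(3)}$-clusters each of diameter far below $\rho$, bridging gaps between $\Di$-components that are themselves smaller than $\rho$. Such a block is good under your definition yet admits an $\Ei$-crossing, so the comparison with site percolation breaks down. Your final paragraph about inter-block spacing exceeding $\rho+2R$ addresses independence between blocks, not this within-block bridging. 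The paper's proof handles exactly this point probabilistically: it introduces the event $E_n$ that (besides the absence of a $\Di$-crossing) all distinct components of $\Di\cap R_N$ are separated by at least $1/n$, chooses $n_0$ with $\P(E_{n_0})\ge 1-\kappa_0/2$, and only then sets the cluster-diameter threshold to $1/(2n_0)$ and takes $v$ small enough that, after tiling $R_N$ by $\lceil 3N^2/v^2\rceil$ squares of side $v$, the union bound $\lceil 3N^2/v^2\rceil C_1\exp(-C_2/(n_0 v))$ is below $\kappa_0/4$. Replacing your fixed $\rho$ by this random-but-quantified separation scale $1/n_0$ is the missing idea; with it, the rest of your outline goes through.
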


			To prove Theorem~\ref{v:cCAFrandom}, let $\Si$ be a Poisson point process with intensity $\lambda>\lambda_c^B(1)$.	Define	\[
			\Vi_0=\left(\cup_{x\in\Si}B(x,1)\right)^{\complement},
			\]
			We know that 
			$$
			\P(\text{diam}(\Vi_0({\bf 0})) \geq a) \leq K_1 \exp(-K_2 a)
			$$
			for all $a > 0$, where $\Vi_0({\bf 0})$ denotes the connected component of $\Vi_0$ containing the origin and  $K_1$ and $K_2$ are as in Proposition~\ref{r-Bool1}. Choose $q\in\N$ large enough such that 
			\[
			\frac{\lambda}{\lambda+q\cdot\frac{\lambda_c^B(1)}{2}}<t.
			\]
			Now applying Proposition~\ref{r-Bool1}  $q$ times, we get $v_1,v_2,\ldots, v_q\in(0,1)$ and a Poisson point process $\Ti_q$ with intensity $\mu_q=\sum_{i=1}^{q}\frac{\lambda_c^B(1)}{2v_i^2}$. Define
			\[
			\Vi_q=\left(\cup_{y\in\Ti}B(y,\tau^{(q)}_y)\right)\cup\left(\cup_{x\in\Si}B(x,1)\right)^{\complement},
			\]
			where $\tau^{(q)}$ takes value $v_i$ with probability $\frac{\lambda_c^B(1)}{2v_i^2\mu_q}$. Clearly,
			$$
			\P(\text{diam}(\Vi_q({\bf 0})) \geq a) \leq K_1 \exp(-K_2 a)
			$$
			for all $a > 0$, where $\Vi_q({\bf 0})$ denotes the connected component of $\Vi_q$ containing the origin and  $K_1$ and $K_2$ are as in Proposition~\ref{r-Bool1}.		This implies that $\R^2\setminus\Vi_q$ has an unbounded connected component with probability $1$.
			
			Now, consider the Voronoi percolation model $(\Si,\lambda,1;\Ti_q,\mu_q,\tau^{(q)})$. 
			For any $y\in \R^2\setminus\Vi_q$, there exists $X_i\in\Si$ such that 
			$
			||y-X_i||_2\leq \frac{1}{2}
			$, also, $
			||y-X_j||_2\geq \frac{\tau_j}{2}
			$ for all $X_j\in \Ti_q$,
			thus $\R^2\setminus\Vi_q\subseteq{C^S}$; which implies that  $C^S$ also has an unbounded connected component with probability $1$.
			However, the covered area fraction in the $\Si,\Ti^{(q)}$ model equals
			\[
			\frac{\lambda}{\lambda+\mu_q\E[{\tau^{(q)}}^2]}=\frac{\lambda}{\lambda+q\cdot\frac{\lambda_c^B(1)}{2}}<t,
			\]
			which completes the proof. 
		\end{proof}
		
		\section{Appendix}
		\begin{proof} [Proof of Proposition~\ref{th:exp}]
			Consider the integer lattice $\Zz$  with vertices $u$ and $v$ adjacent if and only if $\|u-v\|_{\infty}=1$. For $z=(z_1,z_2)\in\mathbb{Z}^2$, consider the squares 
			\begin{align*}
				D_0(z)&=\left(z_1N,(z_1+1)N\right]\times\left(z_2N,(z_2+1)N\right]\\
				D_1(z)&=\left((z_1-1)N,(z_1+2)N\right]\times\left((z_2-1)N,(z_2+2)N\right].
			\end{align*}		
			A vertex $z$ is open if  there exists a connected component $\Lambda \subseteq C^{\text{red}}$  such that $\Lambda\cap D_0(z)\neq\emptyset $ and also $\Lambda\cap D_1(z)^{\complement}\neq \emptyset$. Otherwise $z$ is closed. Thus
			\[
			q:=\P(\text{$z$ is open})\leq 4 \P(H_N)
			\]
			Since $N\geq R$, the event that $z$ is open depends only on the Poisson points in $\left((z_1-2)N,(z_1+3)N\right]\times\left((z_2-2)N,(z_2+3)N\right]\times (0,\infty)$. So, we have a dependent site percolation model with $ \P(\text{$z$ is open}) =q$ and the states of the vertices $w$ and $z$ are independent whenever $\|w-z\|_{\infty}\geq 5$.  Note that diam$\left( D_0(z)\right)= \sqrt{2}N$ and diam$\left( D_1(z)\right)= 3\sqrt{2}N$. Therefore $\text{diam} (C^{\text{red}}({\bf 0}))\geq a$ implies $C^{\text{red}}({\bf 0})$ must intersect at least $\lceil\frac{a}{\sqrt{2}N}\rceil$ many distinct translates of  $D_0(z)$, and if $a\geq 3\sqrt{2}N $ then $C^{\text{red}}({\bf 0})$ also intersects the boundary of the  correspondingly translated $D_1(z)^{\complement}$.
			Let $C_{{\text{site}}}$ denote the open cluster of the origin in the site percolation model. Then for $a/N\geq 3\sqrt{2}$,   $\text{diam} (C^{\text{red}}({\bf 0}))\geq a$     implies that $C_{{\text{site}}}$ consists of  at least $\lceil\frac{a}{\sqrt{2}N}\rceil$ many vertices. Now, given a set $I_n$ of $n$ vertices of $\Zz$, there are at least $\frac{n}{11^2} $ vertices whose states are independent of each other, since the states of two vertices $w$ and $z$ are independent whenever $\|w-z\|_{\infty}\geq 5$. So $\mathbb{P}$(all vertices of $I_n$ are open) $ \leq q^{\frac{n}{11^2}} $. Now, the total number all connected sets $I_n$ of $n$ vertices of $\Zz$ containing the origin is at most $(9e)^n$ (Kesten 1982, Lemma 5.1). Using this bound, we get 
			\[
			\P\left(\left|C_{\text{site}}\right|=n\right) \leq (9e)^n q^{\frac{n}{11^2}} \leq (9e)^n\left(4\P\left(H_N\right)\right)^{\frac{n}{11^2}}.
			\]
			Therefore taking $ \frac{a}{N}\geq 3\sqrt{2} $, we have 
			\begin{align*}
				\P(\text{diam} (C^{\text{red}}({\bf 0}))\geq a)&\leq \sum_{n\geq \lceil\frac{a}{\sqrt{2}N}\rceil}\P\left(\abs{C_{{\text{site}}}}=n\right)\\
				&\leq  \sum_{n\geq \lceil\frac{a}{\sqrt{2}N}\rceil} (9e)^n \left(4 \P\left(H_N\right)\right)^{\frac{n}{11^2}}.
			\end{align*}
			Choosing $\kappa_0<\frac{1}{4\cdot(9e)^{11^2}}$, we have that, whenever $\P\left(H_N\right)<\kappa_0$ and $ a/N\geq 3\sqrt{2} $,		
			\[
			\mathbb{P}(\text{diam} (C^{\text{red}}({\bf 0}))\geq a)\leq b_1e^{-b_2\frac{a}{N}} 
			\]		
			for positive constants $b_1$, $b_2$ depending only on $\kappa_0$. \end{proof}
		
		\vspace{.5cm}
		
		\begin{proof}[Proof of Proposition~\ref{r-Bool1}(i)]
			Consider the integer lattice $\Zz$  with vertices $u$ and $v$ adjacent if and only if $\|u-v\|_{\infty}=1$. For $z=(z_1,z_2)\in\mathbb{Z}^2$, consider the squares 
			\begin{align*}
				D_0(z)&=\left(z_1N,(z_1+1)N\right]\times\left(z_2N,(z_2+1)N\right]\\
				D_1(z)&=\left((z_1-1)N,(z_1+2)N\right]\times\left((z_2-1)N,(z_2+2)N\right].
			\end{align*}		
			A vertex $z$ is open if  there exists a connected component $\Lambda \subseteq {\Di}$  such that $\Lambda\cap D_0(z)\neq\emptyset $ and also $\Lambda\cap D_1(z)^{\complement}\neq \emptyset$. Otherwise $z$ is closed. Thus
			\begin{align*}
				q&:=\P(\text{$z$ is open})\\
				&\leq 4 \P\left(\text{there is a component of $\Di\cap R_N$ connecting the left and right edges of $R_N$} \right)
			\end{align*}	

			Now, an argument using the dependent site percolation structure (as in the proof of Proposition~\ref{th:exp}), yields
			\[
			\mathbb{P}(\text{diam} ({\Di}({\bf 0}))\geq a)\leq b_1e^{-b_2\frac{a}{N}} 
			\]		
			for positive constants $b_1$, $b_2$ depending only on $\kappa_0$. \end{proof}
		
		\vspace{.5cm}
		
		\begin{proof}[Proof of Proposition~\ref{r-Bool1}(ii)]
			Note that there exists $N>1$ such that $\R^2 \setminus \Di$ admits a vertical crossing of the rectangle $R_N$ with probability at least $1-\frac{\kappa_0}{4}$.
			Thus 
			\begin{equation}
				\P(E_{n_0}) \geq 1- \kappa_0 /2 \text{ for some } n_0 \geq 1
			\end{equation}
			where, for $n \geq 1$, $E_n$ is the event that
			\begin{itemize}
				\item[(i)]  $R_N \cap (\R^2 \setminus \Di)$ admits a vertical crossing of the rectangle $R_N$, 
				\item[(ii)] distinct connected  components of $\Di\cap R_N$ are separated by a distance at least $1/n$, and
				\item[(iii)] if a connected component of $\Di$ does not intersect the left or right edge of $R_N$ then it is at a distance at least $1/n$ from the boundary.
			\end{itemize}
			The Boolean model $(\Gi, \frac{\lambda}{v^2},v)$ being subcritical, there exist constants $C_1, C_2>0$ such that
			\[
			\P\left(\text{diam}(C_{\Gi}^{(v)}([0,v]^2) \geq a v \right) \leq C_1 \exp(-C_2 a).
			\]
			The above equation may be rewritten as
			\[
			\P\left(\text{diam}(C_{\Gi}^{(v)}([0,v]^2) \geq b \right) \leq C_1 \exp(-C_2 b/v) \mbox{ for any $b>0$}.
			\]	
			Now, suppose that $E_{n_0}$ occurs and that 	$\Di$ admits a horizontal crossing of the rectangle $R_N$. It must then be the case that there exists a square $S$ (say) of size $v\times v$ for which the diameter of  $C_{\Gi}^{(v)}(S)$ is at least $\frac{1}{n_0}$. Also, $R_N$ can be tiled with atmost $\left\lceil \frac{3N^2}{v^2}\right \rceil$ squares of size $v\times v$. So we have that there exists $v_0$ such that
			\begin{align*}
				&\P\left(\Ei \text{ admits a horizontal crossing of the rectangle } R_N\right) \\
				&\leq \frac{\kappa_0}{2}+ \left\lceil \frac{3N^2}{v^2}\right \rceil\cdot C_1\exp\left(-C_2\cdot\frac{1}{n_0 v} \right)\\
				& < \frac{3\kappa_0}{4} \text{ (for all $v$ small enough). }
			\end{align*}
			Hence using Proposition~\ref{r-Bool1}\,(i), we get that 
			\[
			\P\left(\text{diam}(\Ei({\bf 0})) \geq a \right) \leq K_1 \exp(-K_2 a) \mbox{ for any $a>0$},
			\]
			where $K_1, K_2$ are as in Proposition~\ref{r-Bool1}.
		\end{proof}

		\section*{Acknowledgement}
		The authors thank the referees whose suggestions led to a much better presentation of this paper. PPG thanks the Indian Statistical Institute for support for his doctoral progamme. 
		RR acknowledges the Grant MTR/2017/000141 from DST which
		supported this research.

	\end{document}